\documentclass{amsart}

\usepackage{enumerate, amsmath, amsfonts, amssymb, amsthm, thmtools, wasysym, graphics, graphicx, xcolor, frcursive,bbm}
\usepackage[all]{xy}
\usepackage{etex}
\usepackage{lscape}
\usepackage{tikz-cd}
\usepackage{float}
\usepackage{xcolor}
%\usepackage[dvipsnames]{xcolor}

% Theorem Environments
\theoremstyle{plain}
\newtheorem{thm}{Theorem}[section]
\newtheorem{lem}[thm]{Lemma}

\newtheorem{prop}[thm]{Proposition}

\newtheorem{defn}[thm]{Definition}
\newtheorem{rem}[thm]{Remark}

\newtheorem{exmp}[thm]{Example}

\newcommand{\cox}{\textsc{Cox}}
\newcommand{\art}{\textsc{Art}}
\newcommand{\hyp}{\textsc{Hyp}}
\newcommand{\hecke}{\textsc{Hecke}}

% general comment command

\newcounter{georgcomments}

%General TO DO comments.

%%%%%%%%%%%%%%%%
\begin{document}
%%%%%%%%%%%%%%%%

\title[Cancellative elliptic Artin monoids]{Cancellative elliptic Artin monoids}

\author[Georges Neaime]{Georges Neaime}
\email{gneaime@math.uni-bielefeld.de}
\address{Universität Bielefeld, Postfach 10 01 31, D-33501 Bielefeld}

\begin{abstract}
    We define new presentations for elliptic Artin groups. We also show that the elliptic monoids defined by these presentations are cancellative. This solves the failure of cancellativity for the presentations of elliptic Artin monoids that were introduced before in the literature. Our approach also paves the way to construct Garside structures for elliptic Artin monoids and groups.
\end{abstract}

\date{\today}

\maketitle

\tableofcontents

%%%%%%%%%%%%%%%%%%%%%%%%%%%%%%%%%%%%%%%%
\section{Introduction}\label{SectionIntroduction}
%%%%%%%%%%%%%%%%%%%%%%%%%%%%%%%%%%%%%%%%

In the middle of the 80s, Saito \cite{SaitoI} introduced the notion of an elliptic root system inside a vector space $V$ equipped with a symmetric bilinear form $I$ with rank 2 radical. He also introduced the notion of a marking and classified marked elliptic root systems by introducing the notion of elliptic Dynkin diagrams (for example, see Figures~\ref{Fig_Diagram_Saito_D4} and \ref{Fig_Elliptic_E_6,7,8}). 

To each elliptic root system and elliptic diagram $\Gamma$, one can naturally define the notion of an elliptic Weyl group $W(\Gamma)$, defined as the group generated by the reflections associated to the root system. If we consider the hyperbolic extension of the space $(V,I)$, for which the bilinear form has rank $1$ radical, one obtains the notion of a hyperbolic elliptic Weyl group $\hyp{(\Gamma)}$. It fits into a central extension, where the kernel of the natural map from $\hyp{(\Gamma)}$ to $W(\Gamma)$ is the cyclic group generated by a certain power of the so-called Coxeter transformation of $W(\Gamma)$. In 1997, Saito--Takebayashi~\cite{Saito-Takebayashi} defined presentations by generators and relations for $W(\Gamma)$ and $\hyp{(\Gamma)}$. 

To each elliptic Weyl group $W(\Gamma)$, one can define the notion of an elliptic Artin group $\art{(\Gamma)}$ and an elliptic Hecke algebra $\hecke{(\Gamma)}$ (see~\cite{Yamada} for the simply laced cases and \cite{YSaito-Shiota} in general). Similarly to spherical and affine Artin groups, the presentations of elliptic Artin groups are described using homogeneous relations, meaning that every relation preserves the length of words.

By a personal communication with K. Saito, it will be shown in \cite{KSaito-YSaito} that the groups defined by the homogenised relations are isomorphic to the fundamental groups of the complements of elliptic discriminants~\cite{SaitoII} by identifying them with those defined by Looijenga \cite{Looijenga1976, Looijenga1980} and van der Lek \cite{vanderLek-Thesis}.

In the classical theory for spherical Artin groups, a seminal result is that a certain cancellative Artin monoid injects in the Artin group as a lattice. This fits into a general theory called Garside theory (see~\cite{DehornoyBook}). This theory was used in order to prove many important conjectures for Artin groups such as the $K(\pi,1)$ conjecture, and to solve the word problem. 

Concerning the elliptic theory, Saito \cite{Saito2ndHomotopyNonCancellative,SaitoHigherHomotopy} showed that the elliptic Artin monoid defined by these homogeneous relations may no longer be cancellative and therefore, the natural morphism from the monoid to the elliptic Artin group may not be injective. Hence, one is not able to apply Garside theory in this context.

In this paper, we define new presentations for the elliptic Artin groups in the simply laced case and show that the monoids defined by these presentations are cancellative. This is therefore a remarkable step towards the construction of Garside structures for elliptic Artin monoids and groups. 

In order to define the new presentations, we use some techniques that were developed by the author in \cite{Neaime_TildeA} based on the work of Corran--Picantin~\cite{CorranPicantin} and Corran--Lee--Lee~\cite{CorranLeeLee}. These new presentations are described in Propositions~\ref{Prop_New_PresD4} and \ref{Prop_NewPres_Elliptic_En}. It is notable to mention that the presentations described in \cite{Neaime_TildeA} enabled the author to introduce new interval Garside structures for the affine Artin groups of type $\tilde{A}$ based on his earlier discoveries \cite{NeaimeEEN} in the context of complex reflection groups.

Using a combinatorial technique, namely the word reversing technique developed by Dehornoy \cite{DehornoyCompletePosPres}, we are able to show that the elliptic Artin monoids defined by our new presentations are cancellative. This is Theorem~\ref{Thm_Cancellativity}. 

The paper is organised as follows. In Section~\ref{SectionAffineArtinGroupTildeA}, we introduce various presentations for the affine Artin group of type $\tilde{A}$. In particular, we recall in Definition~\ref{Def_PresCorranLeeLee_TildeA} the presentation of Corran--Lee--Lee for $\art{(\tilde{A})}$ that inspired the current work. In Section~\ref{SectionPresEllipticArtinGrps}, we introduce in Propositions~\ref{Prop_New_PresD4} and \ref{Prop_NewPres_Elliptic_En} the new presentations for the elliptic Artin groups. We also describe new elliptic diagrams to illustrate these presentations (see Figures~\ref{Fig_New_Diagram_D4} and \ref{Fig_New_Diagram_En}). Finally, after introducing several combinatorial techniques, Section~\ref{SectionCancelEllipticArtinMon} shows that the monoids defined by these presentations are cancellative (see Theorem~\ref{Thm_Cancellativity}).\\

\textit{Acknowledgements.} This work was supported by the German Research Foundation (SFB-TRR 358/1 2023 - 491392403).

%%%%%%%%%%%%%%%%%%%%%%%%%%%%%%%%%%%%%%%%%%%%%%%%%%%%
\section{The affine Artin group of type $\tilde{A}$}\label{SectionAffineArtinGroupTildeA}
%%%%%%%%%%%%%%%%%%%%%%%%%%%%%%%%%%%%%%%%%%%%%%%%%%%%

In this section, we will define three presentations of the affine Artin group of type $\tilde{A}_n$ for $n \geq 2$. The first, given in Definition~\ref{Def_StandardPres_TildeA}, is the classical presentation. The second one (Definition~\ref{Def_PresShi_TildeA}) is due to Shi~\cite{Shi} and the third (Definition~\ref{Def_PresCorranLeeLee_TildeA}) is due to Corran--Lee--Lee~\cite{CorranLeeLee}. The latter presentation was used in \cite{CorranLeeLee} to describe presentations of the complex braid groups associated to imprimitive complex reflection groups. Another notable application of Definition~\ref{Def_PresCorranLeeLee_TildeA} is the existence of interval Garside structures for the affine Artin group of type $\tilde{A}$ \cite{Neaime_TildeA} (see also \cite{NeaimeEEN}).

\begin{defn}[Classical presentation]\label{Def_StandardPres_TildeA}
    Let $\art(\tilde{A}_{n-1})$ be the affine Artin group of type $\tilde{A}_{n-1}$ for $n \geq 3$.
    The classical presentation is defined by generators $r_i$, $i \in \mathbb{Z}/n\mathbb{Z}$ and relations:
    \begin{itemize}
        \item[(1)] $r_ir_{i+1}r_i = r_{i+1}r_ir_{i+1}$ for $i \in \mathbb{Z}/n\mathbb{Z}$
        \item[(2)] $r_ir_j = r_jr_i$ for $i < j$, where $i,j \in \mathbb{Z}/n\mathbb{Z}$.
    \end{itemize}
    Adding the quadratic relations $r_i^2=1$, we obtain a presentation of the Coxeter group $\cox(\tilde{A}_{n-1})$ of type $\tilde{A}_{n-1}$. 
\end{defn}

The presentation given in Definition~\ref{Def_StandardPres_TildeA} is described by the diagram of Figure~\ref{Fig_StandardDiag_TildeA}. We use the standard conventions of Coxeter diagrams. Relation $(1)$ is described by an unlabelled edge. Relation $(2)$ describes the fact that there is no edge between the corresponding vertices.

%\tikzstyle{every node}=[]

\begin{figure}[H]
    \centering
    \begin{tikzpicture}[mynode/.style={circle, draw, fill=black!50, inner sep=0pt, minimum width=4pt}]
        \node[mynode,label=below:$r_1$] (r1) at (0,0) {};
        \node[mynode,label=below:$r_2$] (r2) at (2,0) {};
        \node[mynode,label=below:$r_3$] (r3) at (4,0) {};
        \node[mynode] (rnm2) at (6,0) {};
        \node[mynode,label=below:$r_{n-1}$] (rnm1) at (8,0) {};
        \node[mynode,label=above:$r_{n}$] (rn) at (4,1.5) {};

        \draw[-] (r1) to (r2);
        \draw[-] (r2) to (r3);
        \draw[dashed,-] (r3) to (rnm2);
        \draw[-] (rnm2) to (rnm1);
        \draw[dashed,-] (r1) to (rn);
        \draw[dashed,-] (rnm1) to (rn);
    \end{tikzpicture}
    \caption{The diagram of the presentation of $\art(\tilde{A}_{n-1})$ given in Definition \ref{Def_StandardPres_TildeA}.}
    \label{Fig_StandardDiag_TildeA}
\end{figure}
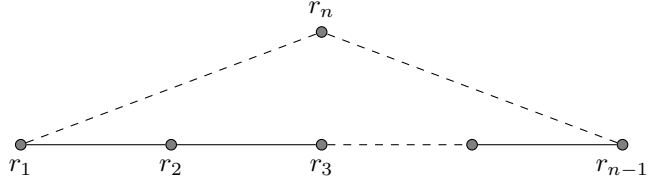

\begin{defn}[Presentation of Shi]\label{Def_PresShi_TildeA}
    The group $\art(\tilde{A}_{n-1})$ is isomorphic to the group defined by a presentation with generators $t_0$, $t_1$, $r_3$, $r_4$, $\cdots$, $r_{n}$ and relations:
    \begin{itemize}
        \item[(1)] $r_ir_{i+1}r_i = r_{i+1}r_ir_{i+1}$ for $3 \leq i \leq n-1$
        \item[(2)] $r_ir_j = r_jr_i$ for $3 \leq i < j \leq n$
        \item[(3)] $r_3 t_i r_3 = t_i r_3 t_i$ for $i=0,1$
        \item[(4)] $r_j t_i = t_i r_j$ for $i=0,1$ and $4 \leq j \leq n$
        \item[(5)] $r_3 t_1t_0 r_3 t_1t_0 = t_1t_0 r_3 t_1t_0 r_3$.
    \end{itemize}
    Adding the quadratic relations $t_0^2 = t_1^2 = r_i^2=1$, for $3 \leq i \leq n$, we obtain a presentation of the Coxeter group $\cox(\tilde{A}_{n-1})$ of type $\tilde{A}_{n-1}$. 
\end{defn}

The presentation described in Definition~\ref{Def_PresShi_TildeA} can be illustrated by the diagram of Figure~\ref{Fig_ShiDiagram_TildeA}. The label $\infty$ on the edge between $t_0$ and $t_1$ means that there is no relation between $t_0$ and $t_1$. The double edge inside the triangle formed by $t_0$, $t_1$, and $r_3$ illustrates the relation $(5)$ of the presentation.

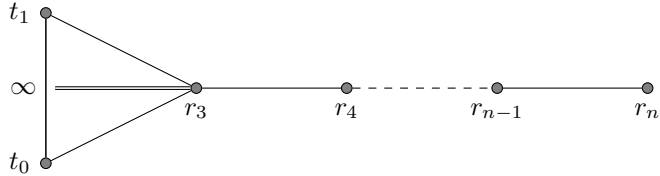
\begin{figure}[H]
    \centering
    \begin{tikzpicture}[mynode/.style={circle, draw, fill=black!50, inner sep=0pt, minimum width=4pt}]
        \node[mynode,label=left:$t_0$] (t0) at (0,0) {};
        \node[mynode,label=left:$t_1$] (t1) at (0,2) {};
        \node[mynode,label=below:$r_3$] (r3) at (2,1) {};
        \node[mynode,label=below:$r_4$] (r4) at (4,1) {};
        \node[mynode,label=below:$r_{n-1}$] (rnm1) at (6,1) {};
        \node[mynode,label=below:$r_n$] (rn) at (8,1) {};
        \node[] (0) at (0,1) {};
    
        \draw[-] (t0) to node[auto] {$\infty$} (t1);
        \draw[-] (t0) to (t1);
        \draw[-] (t0) to (r3);
        \draw[-] (t1) to (r3);
        \draw[-] (r3) to (r4);
        \draw[dashed,-] (r4) to (rnm1);
        \draw[-] (rnm1) to (rn);
        \draw[double,-] (r3) to (0);
    \end{tikzpicture}
    \caption{The diagram of the presentation of $\art(\tilde{A}_{n-1})$ given in Definition~\ref{Def_PresShi_TildeA}.}
    \label{Fig_ShiDiagram_TildeA}
\end{figure}

\begin{defn}[Presentation of Corran--Lee--Lee]\label{Def_PresCorranLeeLee_TildeA}
    The group $\art(\tilde{A}_{n-1})$ is isomorphic to the group defined by a presentation with generators $t_i$ ($i \in \mathbb{Z}$), $r_3$, $r_4$, $\cdots$, $r_n$ and relations:
    	\begin{itemize}
    	    \item[(1)] $r_ir_{i+1}r_i = r_{i+1}r_ir_{i+1}$ for $3 \leq i \leq n-1$
                \item[(2)] $r_ir_j = r_jr_i$ for $3 \leq i < j \leq n$
                \item[(3)] $r_3 t_i r_3 = t_i r_3 t_i$ for $i \in \mathbb{Z}$
                \item[(4)] $r_j t_i = t_i r_j$ for $i \in \mathbb{Z}$ and $4 \leq j \leq n$
                \item[(5)] $t_i t_{i-1} = t_j t_{j-1}$ for $i,j \in \mathbb{Z}$.
    	\end{itemize}
    Adding the quadratic relations $t_i^2 = 1$ for $i \in \mathbb{Z}$ and $r_i^2=1$, for $3 \leq i \leq n$, we obtain a presentation of the Coxeter group $\cox(\tilde{A}_{n-1})$ of type $\tilde{A}_{n-1}$.
\end{defn}

The presentation described in Definition~\ref{Def_PresCorranLeeLee_TildeA} can be illustrated by the following diagram. Notice that the line with labels $t_i$ for $i\in \mathbb{Z}$ illustrates the relation $(5)$ of the presentation.

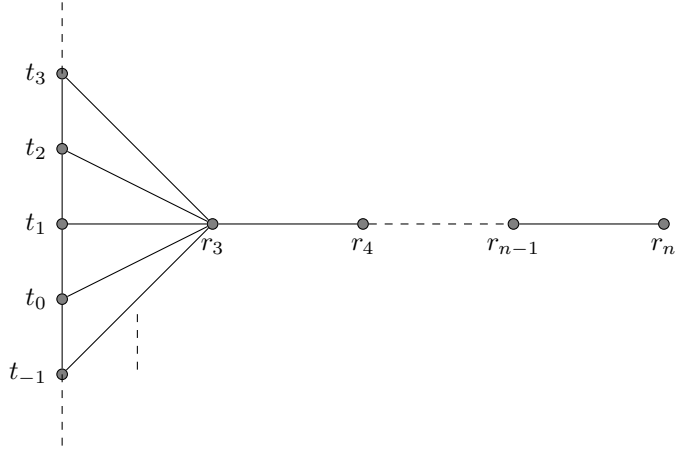
\begin{figure}[H]
    \centering
    \begin{tikzpicture}[mynode/.style={circle, draw, fill=black!50, inner sep=0pt, minimum width=4pt}]
        \node[mynode,label=left:$t_0$] (t0) at (0,0) {};
        \node[mynode,label=left:$t_{-1}$] (tm1) at (0,-1) {};
        \node[mynode,label=left:$t_1$] (t1) at (0,1) {};
        \node[mynode,label=left:$t_2$] (t2) at (0,2) {};
        \node[mynode,label=left:$t_3$] (t3) at (0,3) {};
        
        \node[mynode,label=below:$r_3$] (r3) at (2,1) {};
        \node[mynode,label=below:$r_4$] (r4) at (4,1) {};
        \node[mynode,label=below:$r_{n-1}$] (rnm1) at (6,1) {};
        \node[mynode,label=below:$r_n$] (rn) at (8,1) {};
        \node[] (0) at (0,1) {};

        \draw[-] (t0) to (t1);
        \draw[-] (t1) to (t2);
        \draw[-] (t2) to (t3);
        \draw[-] (t0) to (tm1);
        \draw[dashed,-] (0,-1) -- (0,-2);
        \draw[dashed,-] (0,3) -- (0,4);

        \draw[-] (t0) to (r3);
        \draw[-] (t1) to (r3);
        \draw[-] (t2) to (r3);
        \draw[-] (t3) to (r3);
        \draw[-] (tm1) to (r3);
        \draw[dashed,-] (1,-0.2) to (1,-1);
        
        \draw[-] (r3) to (r4);
        \draw[dashed,-] (r4) to (rnm1);
        \draw[-] (rnm1) to (rn);
    \end{tikzpicture}
    \caption{The diagram of the presentation of $\art(\tilde{A}_{n-1})$ given in Definition~\ref{Def_PresCorranLeeLee_TildeA}.}
    \label{Fig_CorranLeeLeeDiagram_TildeA}
\end{figure}

Note that in Definition~\ref{Def_PresCorranLeeLee_TildeA}, we are attaching the so-called dual presentation of the infinite dihedral group corresponding to the infinite generators $t_i$, $i \in \mathbb{Z}$ (this is represented by the line in Figure~\ref{Fig_CorranLeeLeeDiagram_TildeA}) to classical presentations of type $A$ corresponding to the finite subsets of generators $\{t_i, r_3, r_4, \cdots, r_n\}$ for each $i \in \mathbb{Z}$.

\begin{exmp}\label{Ex_TypeA2Tilde}
    We will illustrate the $3$ generating sets of $\cox(\tilde{A}_{2})$ given in Definitions~\ref{Def_StandardPres_TildeA}, \ref{Def_PresShi_TildeA}, and \ref{Def_PresCorranLeeLee_TildeA}. Note that we are analysing here the generators in the Coxeter group - and not in the Artin group -, so all the generators are reflections. Consider for that the tiling of $\mathbb{R}^2$ be equilateral triangles as shown in Figure~\ref{Fig_A2Tilde_Tiling}. This corresponds to the Coxeter complex of type $\tilde{A}_2$. The fundamental chamber is denoted by $\mathcal{C}$. The $3$ hyperplanes of $\mathcal{C}$ correspond to the standard generating set of reflections $r_1,r_2$, and $r_3$ as given in Definition~\ref{Def_StandardPres_TildeA}. The hyperplanes that correspond to $r_1, r_2$, and $r_3$ are coloured in thick green, red, and blue, respectively. In order to obtain the generating set of Definition~\ref{Def_PresShi_TildeA}, consider $t_1 := r_1r_3r_1$. Then the generators of $\cox(\tilde{A}_{2})$ are $r_2 := t_0$, $t_1$, and $r_3$. Notice that the hyperplanes of $t_0$ and $t_1$ are parallel, and the element $t_1t_0$ is a translation. Now, take all the reflections whose hyperplanes are parallel to those of $t_0$ and $t_1$. This corresponds to the reflections $t_i$, $i \in \mathbb{Z}$ of the infinite dihedral group. The related hyperplanes are coloured in red. Adding $r_3$ to these reflections, we obtain the generating set of Definition~\ref{Def_PresCorranLeeLee_TildeA}.
\end{exmp}

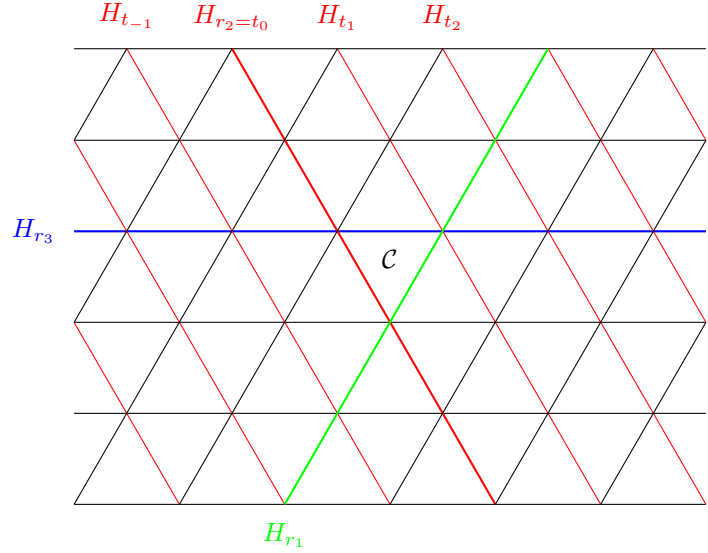
\begin{figure}[H]
    \centering
    \begin{tikzpicture}[scale=0.7]
        \draw[-] (-6,-1.73) -- (6,-1.73); 
        \draw[-] (-6,-3.46) -- (6,-3.46); 
        \draw[-] (-6,0) -- (6,0);
        \draw[-,thick,color=blue] (-6,1.73) -- (6,1.73);
        \node[label=left:\textcolor{blue}{$H_{r_3}$}] (Hs3) at (-6,1.73) {};
        \draw[-] (-6,3.46) -- (6,3.46);
        \draw[-] (-6,5.2) -- (6,5.2);
        %\draw[-] (-6,6.93) -- (6,6.93);

        \draw[-,color=red] (-6,0) -- (-4,-3.46);
        \draw[-,color=red] (-6,3.46) -- (-2,-3.46);
        \draw[-,color=red] (-5,5.2) -- (0,-3.46);
        \node[label=above:\textcolor{red}{$H_{t_{-1}}$}] (Htm1) at (-5,5.2) {};
        \draw[-,thick,color=red] (-3,5.2) -- (2,-3.46);
        \node[label=above:\textcolor{red}{$H_{r_2=t_0}$}] (Ht0) at (-3,5.2) {};
        \draw[-,color=red] (-1,5.2) -- (4,-3.46);
        \node[label=above:\textcolor{red}{$H_{t_1}$}] (Ht1) at (-1,5.2) {};
        \draw[-,color=red] (1,5.2) -- (6,-3.46);
        \node[label=above:\textcolor{red}{$H_{t_2}$}] (Ht2) at (1,5.2) {};
        \draw[-,color=red] (3,5.2) -- (6,0);
        \draw[-,color=red] (5,5.2) -- (6,3.46);

        \draw[-] (-5,5.2) -- (-6,3.46);
        \draw[-] (-3,5.2) -- (-6,0);
        \draw[-] (-1,5.2) -- (-6,-3.46);
        \draw[-] (1,5.2) -- (-4,-3.46);
        \draw[-,thick,color=green] (3,5.2) -- (-2,-3.46);
        \node[label=below:\color{green}{$H_{r_1}$}] (Hr1) at (-2,-3.46) {};
        \draw[-] (5,5.2) -- (0,-3.46);
        \draw[-] (6,3.46) -- (2,-3.46);
        \draw[-] (6,0) -- (4,-3.46);

        \node[] (C) at (0,1.2) {$\mathcal{C}$};
    \end{tikzpicture}
    \caption{A portion of the Coxeter complex of type $\tilde{A}_2$, where we show the $3$ generating sets of $\cox(\tilde{A}_{2})$ given in Definitions~\ref{Def_StandardPres_TildeA}, \ref{Def_PresShi_TildeA}, and \ref{Def_PresCorranLeeLee_TildeA}.}
    \label{Fig_A2Tilde_Tiling}
\end{figure}

%%%%%%%%%%%%%%%%%%%%%%%%%%%%%%%%%%%%%%%%%%%%%%%%
\section{Presentations of elliptic Artin groups}\label{SectionPresEllipticArtinGrps}
%%%%%%%%%%%%%%%%%%%%%%%%%%%%%%%%%%%%%%%%%%%%%%%%

In this section, we define new presentations for the elliptic Artin groups of the simply laced types, namely types $\Gamma = \tilde{D}_4^{(1,1)}$ and $\Gamma = \tilde{E}_n^{(1,1)}$ for $n=6,7,8$, which is Saito's notation~\cite{SaitoI}. This is done in Propositions~\ref{Prop_New_PresD4} and \ref{Prop_NewPres_Elliptic_En}. We recall the usual presentations for these types in Definitions~\ref{Def_PresD4_Yamada} and \ref{DefPres_Elliptic_En_Yamada}. Our new presentations are described in diagram presentations given in Figures~\ref{Fig_New_Diagram_D4} and \ref{Fig_New_Diagram_En}. Figures~\ref{Fig_Diagram_Saito_D4} and \ref{Fig_Elliptic_E_6,7,8} describe the usual elliptic Dynkin diagrams of types $\tilde{D}_4^{(1,1)}$ and $\tilde{E}_n^{(1,1)}$ for $n=6,7,8$.

\begin{defn}[Presentation for type $\tilde{D}_4^{(1,1)}$]\label{Def_PresD4_Yamada}
    The elliptic Artin group $\art{(\tilde{D}_4^{(1,1)})}$ of type $\tilde{D}_4^{(1,1)}$ is isomorphic to the group $G(\tilde{D}_4^{(1,1)})$ defined by a presentation with generators $t_0,t_1,s_1,s_2,s_3,s_4$ and relations:
    \begin{itemize}
        \item[$(P_1)$:] $t_is_jt_i = s_jt_is_j$ for $i=0,1$ and $j=1,2,3,4$.
        \item[$(P_2)$:] $s_is_j = s_js_i$ for $i \neq j \in \{1,2,3,4\}$.
        \item[$(P_3)$:] $s_it_1t_0s_it_1t_0 = t_1t_0s_it_1t_0s_i$ for $i = 1,2,3,4$.
    \end{itemize}
    Adding the quadratic relations $t_i^2=1$ for $i=0,1$ and $s_j^2=1$ for $j=1,2,3,4$, we obtain a presentation of the group $\hyp{(\tilde{D}_4^{(1,1)})}$.
\end{defn}

The presentation described in Definition~\ref{Def_PresD4_Yamada} can be illustrated by the elliptic Dynkin diagram due to Saito \cite{SaitoI} given in Figure~\ref{Fig_Diagram_Saito_D4}.

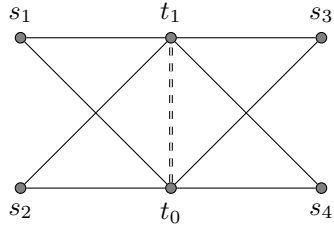
\begin{figure}[H]
    \centering
    \begin{tikzpicture}[mynode/.style={circle, draw, fill=black!50, inner sep=0pt, minimum width=4pt}]
        \node[mynode,label=below:$t_0$] (t0) at (0,0) {};
        \node[mynode,label=below:$s_2$] (s2) at (-2,0) {};
        \node[mynode,label=below:$s_4$] (s4) at (2,0) {};
        \node[mynode,label=above:$s_1$] (s1) at (-2,2) {};
        \node[mynode,label=above:$t_1$] (t1) at (0,2) {};
        \node[mynode,label=above:$s_3$] (s3) at (2,2) {};

        \draw[-] (t0) to (s1);
        \draw[-] (t0) to (s2);
        \draw[-] (t0) to (s3);
        \draw[-] (t0) to (s4);
        \draw[-] (t1) to (s1);
        \draw[-] (t1) to (s2);
        \draw[-] (t1) to (s3);
        \draw[-] (t1) to (s4);
        \draw[dashed, double, -] (t0) to (t1);
    \end{tikzpicture}
    \caption{Elliptic Dynkin diagram of type $\tilde{D}_4^{(1,1)}$.}
    \label{Fig_Diagram_Saito_D4}
\end{figure}

%\begin{rem}
%    In comparison, the relation $(P_3)$ is illustrated in \cite{Shi} by a double edge in the triangle consisting of $t_1,t_0,s_i$, and the double dotted edge between $t_0$ and $t_1$ is replaced in \cite{Shi} by an edge labelled by $\infty$, meaning that there are no relations between $t_0$ and $t_1$ (see Figure~\ref{Fig_Shi_DiagramTildeA2}).
%\end{rem}

%\begin{figure}
%    \centering
%    \begin{tikzpicture}[mynode/.style={circle, draw, fill=black!50, inner sep=0pt, minimum width=4pt}]
%    \node[mynode,label=left:$t_0$] (1) at (0,0) {};
%    \node[mynode,label=left:$t_1$] (2) at (0,2) {};
%    \node[mynode,label=right:$s_i$] (3) at (2,1) {};
%
%    \node[] (0) at (0,1) {};
%    
%    \draw[-] (1) to node[auto] {$\infty$} (2);
%    \draw[-] (1) to (3);
%    \draw[-] (2) to (3);
%    \draw[double,thick,-] (3) to (0);
%    \end{tikzpicture}
%    \caption{Illustration of the portion of the elliptic diagram consisting of $t_0,t_1$, and $s_i$, $i=1,2,3,4$ using the conventions of Shi~\cite{Shi}. See also Figure~\ref{Fig_ShiDiagram_TildeA}.}
%    \label{Fig_Shi_DiagramTildeA2}
%\end{figure}

We will describe in Proposition~\ref{Prop_New_PresD4} a new presentation of the elliptic Arin group $\art{(\tilde{D}_4^{(1,1)})}$ and of $\hyp(\tilde{D}_4^{(1,1)})$. If we look at the portion of the presentation in Definition~\ref{Def_PresD4_Yamada} that corresponds to the generators $t_0,t_1$, and one of the $s_i$'s, we obtain the same relations of type $\tilde{A}_2$ as given by Shi in Definition~\ref{Def_PresShi_TildeA}. Analogously with the presentation given in Definition~\ref{Def_PresCorranLeeLee_TildeA}, the idea here is again to define a new presentation by using a bigger (even infinite) generating set obtained by adding elements $t_i$ with $i\in \mathbb{Z}$ that correspond to all the reflections in $\hyp(\tilde{D}_4^{(1,1)})$ whose hyperplanes are parallel to $t_0$ and $t_1$. These reflections generate the infinite dihedral group inside $\hyp(\tilde{D}_4^{(1,1)})$.

\begin{prop}[New presentation for type $\tilde{D}_4^{(1,1)}$]\label{Prop_New_PresD4}
    The elliptic Artin group $\art{(\tilde{D}_4^{(1,1)})}$ of type $\tilde{D}_4^{(1,1)}$ is isomorphic to the group $G'(\tilde{D}_4^{(1,1)})$ defined by a presentation with generators $t_i$ for $i\in \mathbb{Z}$ ,$s_1,s_2,s_3,s_4$ and relations:
    \begin{itemize}
        \item[$(R_1)$:] $t_is_jt_i = s_jt_is_j$ for $i \in \mathbb{Z}$ and $j=1,2,3,4$.
        \item[$(R_2)$:] $s_is_j = s_js_i$ for $i \neq j \in \{1,2,3,4\}$.
        \item[$(R_3)$:] $t_it_{i-1} = t_jt_{j-1}$ for $i,j \in \mathbb{Z}$.
    \end{itemize}
    Adding the quadratic relations $t_i^2=1$ ($i \in \mathbb{Z}$) and $s_j^2=1$ for $j=1,2,3,4$, we obtain a presentation of the group $\hyp{(\tilde{D}_4^{(1,1)})}$.
\end{prop}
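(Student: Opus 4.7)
The plan is to construct mutually inverse homomorphisms between $G := G(\tilde D_4^{(1,1)})$ and $G' := G'(\tilde D_4^{(1,1)})$ that fix the common generators $s_1,\dots,s_4,t_0,t_1$. First, define $\phi : G\to G'$ by the identity on these generators and check that the relations $(P_1)$, $(P_2)$, $(P_3)$ hold in $G'$. Relations $(P_1)$ and $(P_2)$ are immediate specialisations of $(R_1)$ and $(R_2)$. For $(P_3)$, relation $(R_3)$ forces $t_2 = t_1 t_0 t_1^{-1}$ in $G'$; substituting this into $(R_1)$ for $i = 2$ and simplifying using the braid relations $(R_1)$ for $i = 0, 1$ yields exactly $(P_3)$ after a short direct calculation (multiply on the right by $t_1$, use $t_1^{-1} s_j t_1 = s_j t_1 s_j^{-1}$, then multiply by $s_j$ and $t_0$ and apply $s_j^{-1} t_0 s_j = t_0 s_j t_0^{-1}$).

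For the reverse map, set $\zeta := t_1 t_0 \in G$ and define $\psi : G' \to G$ by $\psi(s_j) = s_j$, $\psi(t_0) = t_0$, $\psi(t_1) = t_1$, and
\[\psi(t_i) := \zeta^k t_\epsilon \zeta^{-k} \quad\text{where } i = 2k + \epsilon,\ \epsilon \in \{0, 1\},\ k \in \mathbb{Z}.\]
Relation $(R_2)$ is immediate and $(R_3)$ reduces to the telescoping identity $\psi(t_i)\psi(t_{i-1}) = \zeta$, which is a routine check from the formula. The heart of the proof is $(R_1)$: for every $i \in \mathbb{Z}$ and $j \in \{1,2,3,4\}$, the element $\zeta^k t_\epsilon \zeta^{-k}$ must braid with $s_j$ in $G$. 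For fixed $j$, I would consider the subgroup $H_j := \langle s_j, t_0, t_1 \rangle \subseteq G$. Relations $(P_1)$ and $(P_3)$ restricted to $H_j$ are precisely Shi's relations for $\art(\tilde A_2)$ (Definition~\ref{Def_PresShi_TildeA} with $n = 3$), so there is a surjection $\eta_j : \art(\tilde A_2) \twoheadrightarrow H_j$ sending the Shi-generators $t_0, t_1, r_3$ to $t_0, t_1, s_j$. In the Corran--Lee--Lee presentation of $\art(\tilde A_2)$ (Definition~\ref{Def_PresCorranLeeLee_TildeA}), solving the recursion $t'_i t'_{i-1} = t'_1 t'_0$ expresses $t'_i$ as $(t'_1 t'_0)^k t'_\epsilon (t'_1 t'_0)^{-k}$ inside $\art(\tilde A_2)$, and the defining braid relation $r_3 t'_i r_3 = t'_i r_3 t'_i$ transports under $\eta_j$ to exactly $s_j \psi(t_i) s_j = \psi(t_i) s_j \psi(t_i)$ in $G$. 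This is $(R_1)$.

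Finally, $\phi$ and $\psi$ agree on the common generators, and the recursion $t_{i+2} = \zeta t_i \zeta^{-1}$ derived from $(R_3)$ in $G'$ shows that each $t_i \in G'$ already coincides with $\zeta^k t_\epsilon \zeta^{-k}$; hence $\phi \circ \psi$ is the identity on every generator of $G'$, while $\psi \circ \phi$ is clearly the identity on $G$, giving $G \cong G'$. The statement for $\hyp(\tilde D_4^{(1,1)})$ follows immediately: modulo the quadratic relations $t_0^2 = t_1^2 = s_j^2 = 1$, the formula $\psi(t_i) = \zeta^k t_\epsilon \zeta^{-k}$ forces $\psi(t_i)^2 = 1$ as well, so the two quadratic completions match.

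The main obstacle is the verification of $(R_1)$ under $\psi$. A direct induction on $|i|$ using only $(P_1)$ and $(P_3)$ becomes technically awkward because the $4$-braid relation $(P_3)$ does not cleanly propagate to conjugates of $t_0, t_1$ by arbitrary powers of $\zeta$; the cleanest route I see is therefore the reduction above to the already-established Corran--Lee--Lee isomorphism for $\art(\tilde A_2)$, applied through each subgroup $H_j$.
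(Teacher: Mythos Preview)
Your proof is correct and follows essentially the same route as the paper: construct mutually inverse homomorphisms $G \leftrightarrow G'$ fixing the common generators, with $\psi(t_i) = \zeta^k t_\epsilon \zeta^{-k}$ (the paper writes this out explicitly in Lemma~\ref{LemT_kInTermsT_0T_1}), verify $(P_3)$ in $G'$ by a short direct computation (the paper's Lemma~\ref{LemP3InG'}), and verify $(R_1)$ in $G$ by reduction to the $\tilde A_2$ situation. The only cosmetic difference is that the paper dispatches $(R_1)$ by citing the computation in Corran--Picantin's Lemma~2.9(a), whereas you invoke the Corran--Lee--Lee isomorphism for $\art(\tilde A_2)$ through the surjections $\eta_j$; both arguments rest on the same underlying identity.
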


The presentation obtained in Proposition~\ref{Prop_New_PresD4} can be illustrated in the diagram presentation of 
Figure~\ref{Fig_New_Diagram_D4}. Relations $(R_1)$ and $(R_2)$ are illustrated in a standard way. Relation $(R_3)$ is illustrated by the middle vertical line with nodes $t_i, i \in \mathbb{Z}$. 

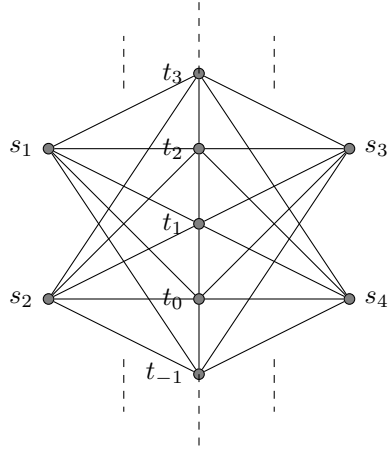
\begin{figure}[H]
    \centering
    \begin{tikzpicture}[mynode/.style={circle, draw, fill=black!50, inner sep=0pt, minimum width=4pt}]
        \node[mynode,label=left:$t_0$] (t0) at (0,0) {};
        \node[mynode,label=left:$t_1$] (t1) at (0,1) {};
        \node[mynode,label=left:$t_2$] (t2) at (0,2) {};
        \node[mynode,label=left:$t_3$] (t3) at (0,3) {};
        \node[mynode,label=left:$t_{-1}$] (tm1) at (0,-1) {};
        \node[mynode,label=left:$s_2$] (s2) at (-2,0) {};
        \node[mynode,label=right:$s_4$] (s4) at (2,0) {};
        \node[mynode,label=left:$s_1$] (s1) at (-2,2) {};
        \node[mynode,label=right:$s_3$] (s3) at (2,2) {};

        \draw[dashed,-] (0,-1) -- (0,-2);
        \draw[dashed,-] (0,3) -- (0,4);
        \draw[-] (t0) to (t1);
        \draw[-] (t1) to (t2);
        \draw[-] (t2) to (t3);
        \draw[-] (t0) to (tm1);

        \draw[-] (s1) to (t0);
        \draw[-] (s1) to (t1);
        \draw[-] (s1) to (t2);
        \draw[-] (s1) to (t3);
        \draw[-] (s1) to (tm1);
        
        \draw[-] (s2) to (t0);
        \draw[-] (s2) to (t1);
        \draw[-] (s2) to (t2);
        \draw[-] (s2) to (t3);
        \draw[-] (s2) to (tm1);

        \draw[-] (s3) to (t0);
        \draw[-] (s3) to (t1);
        \draw[-] (s3) to (t2);
        \draw[-] (s3) to (t3);
        \draw[-] (s3) to (tm1);

        \draw[-] (s4) to (t0);
        \draw[-] (s4) to (t1);
        \draw[-] (s4) to (t2);
        \draw[-] (s4) to (t3);
        \draw[-] (s4) to (tm1);

        \draw[dashed,-] (-1,-0.8) to (-1,-1.5);
        \draw[dashed,-] (1,-0.8) to (1,-1.5);
        \draw[dashed,-] (-1,2.8) to (-1,3.5);
        \draw[dashed,-] (1,2.8) to (1,3.5);
        
    \end{tikzpicture}
    \caption{New diagram presentation for type $\tilde{D}_4^{(1,1)}$.}
    \label{Fig_New_Diagram_D4}
\end{figure}

In order to prove Proposition~\ref{Prop_New_PresD4}, we need the following lemmas. Our proof is similar to the proof of Corran--Picantin \cite{CorranPicantin} in the case of complex reflection groups of type $(e,e,n)$.

\begin{lem}\label{LemP3InG'}
    In $G'(\tilde{D}_4^{(1,1)})$, we have that $$t_tt_0s_it_1t_0s_i = s_it_1t_0s_it_1t_0,$$
    for $i \in \{1,2,3,4\}.$
\end{lem}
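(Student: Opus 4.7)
The plan is to recognise that the identity asserted in the lemma is exactly relation $(P_3)$ from Definition~\ref{Def_PresD4_Yamada}, so the task is to derive it from the new defining relations $(R_1)$, $(R_2)$, $(R_3)$ of Proposition~\ref{Prop_New_PresD4}. Only a single $s_i$ appears on each side of the identity, so $(R_2)$ plays no role: the derivation will use only the braid relations $(R_1)$ and the infinite-dihedral relations $(R_3)$.

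The key observation is that $(R_3)$ gives $t_1 t_0 = t_2 t_1$ (and more generally $t_k t_{k-1}$ is independent of $k$), which lets us introduce the auxiliary generator $t_2$ into the middle of a word. Combined with the braid relations $t_k s_i t_k = s_i t_k s_i$ for $k = 0, 1, 2$, this gives enough flexibility to pull the outer $s_i$ across the whole word. This is the same strategy that derives Shi's relation $(5)$ of Definition~\ref{Def_PresShi_TildeA} from the Corran--Lee--Lee relations of Definition~\ref{Def_PresCorranLeeLee_TildeA} in the type $\tilde{A}$ case, and is Corran--Picantin's argument in \cite{CorranPicantin} transposed into the elliptic setting.

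Concretely, starting from $s_i t_1 t_0 s_i t_1 t_0$, I would alternate between $(R_3)$ and $(R_1)$ as follows: apply $(R_3)$ to turn the leading $t_1 t_0$ into $t_2 t_1$; apply $(R_1)$ to the subword $t_1 s_i t_1 = s_i t_1 s_i$; apply $(R_1)$ again to the subword $s_i t_2 s_i = t_2 s_i t_2$; use $(R_3)$ to collapse the freshly exposed $t_2 t_1$ back to $t_1 t_0$; apply $(R_1)$ to $t_0 s_i t_0 = s_i t_0 s_i$; apply $(R_1)$ once more to $s_i t_1 s_i = t_1 s_i t_1$; and finally apply $(R_3)$ to the residual $t_2 t_1$ to obtain $t_1 t_0 s_i t_1 t_0 s_i$, as required. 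The whole chain is symmetric: the second half repeats the first half but on the right end of the word and with $t_0$ in place of $t_1$.

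The main difficulty is purely bookkeeping: at each step I must check that the correct subword is present for the relation to apply, and that the index shifts introduced and removed by $(R_3)$ balance out. There is no deeper conceptual obstacle, and the same template will be reused when analysing further consequences of $(R_1)$ and $(R_3)$ in the remainder of Section~\ref{SectionPresEllipticArtinGrps}.
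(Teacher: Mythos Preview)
Your proposal is correct and is essentially the paper's own proof: the paper starts from $t_1t_0s_it_1t_0s_i$ and applies $(R_3)$, $(R_1)$, $(R_1)$, $(R_3)$, $(R_1)$, $(R_1)$, $(R_3)$ to reach $s_it_1t_0s_it_1t_0$, which is exactly your seven-step chain read in reverse. The intermediate words and the specific subwords rewritten at each stage coincide.
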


\begin{proof}
    The equation of the lemma is given by the next calculation:
    \begin{center}
    \begin{tabular}{lll}
       $\underline{t_1t_0}s_it_1t_0s_i$  & = & $t_2\underline{t_1s_it_1}t_0s_i$ by using $(R_3)$ \\
         & $=$ & $t_2s_it_1\underline{s_it_0s_i}$ by using $(R_1)$\\
         & $=$ & $t_2s_i\underline{t_1t_0}s_it_0$ by using $(R_1)$\\
         & $=$ & $\underline{t_2s_it_2}t_1s_it_0$ by using $(R_3)$\\
         & $=$ & $s_it_2\underline{s_it_1s_i}t_0$ by using $(R_1)$\\
         & $=$ & $s_i\underline{t_2t_1}s_it_1t_0$ by using $(R_1)$\\
         & $=$ & $s_it_1t_0s_it_1t_0$.
    \end{tabular}
    \end{center}
\end{proof}

\begin{lem}\label{LemT_kInTermsT_0T_1}
    Let $i \in \mathbb{Z}$. In $G'(\tilde{D}_4^{(1,1)})$, we have that $$t_i = \left\{ \begin{array}{ll}
     \underset{i-1}{\underbrace{t_1t_0 \cdots t_0}}\ t_1\ \underset{i-1}{\underbrace{t_0^{-1}t_1^{-1} \cdots t_1^{-1}}}  & if\ i\ is\ odd, \\
      & \\
     \underset{i-1}{\underbrace{t_1t_0 \cdots t_1}}\ t_0\ \underset{i-1}{\underbrace{t_1^{-1}t_0^{-1} \cdots t_1^{-1}}}  & if\ i\ is\ even.
    \end{array} \right.
$$
\end{lem}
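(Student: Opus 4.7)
The plan is to extract a recursion from relation $(R_3)$ and then prove the claim by induction on $i$. Specifically, $(R_3)$ says that the product $t_it_{i-1}$ is independent of $i$, so in particular
\[
t_it_{i-1}=t_1t_0\qquad\text{for every }i\in\mathbb{Z},
\]
which rearranges to
\[
t_i = t_1t_0\,t_{i-1}^{-1}.
\]
This gives a two-sided recursion; I would prove the stated closed form for $i\geq 1$ by induction on $i$ using this recursion, and indicate that the case $i\leq 0$ is handled by the symmetric recursion $t_{i-1}=t_i^{-1}t_1t_0$ (starting from the base $t_{-1}=t_0^{-1}t_1t_0$).

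For the base cases, $i=1$ is immediate from the formula (both under-braces contain zero letters), and $i=2$ reads $t_2=t_1t_0t_1^{-1}$, which matches the recursion applied to $t_1$. For the inductive step, assume the formula holds for $t_{i-1}$. The key observation is that the closed form has a palindromic shape: if $W_{i-1}$ denotes the block of $i-2$ letters before the middle letter, then the block of $i-2$ letters after the middle letter is exactly $W_{i-1}^{-1}$, so the expression for $t_{i-1}$ has the form $W_{i-1}\,m\,W_{i-1}^{-1}$ with middle letter $m\in\{t_0,t_1\}$. Consequently
\[
t_{i-1}^{-1} = W_{i-1}\,m^{-1}\,W_{i-1}^{-1},
\]
and substitution into the recursion yields
\[
t_i = t_1t_0\,W_{i-1}\,m^{-1}\,W_{i-1}^{-1}.
\]
Splitting on the parity of $i$, the factor $t_1t_0\cdot W_{i-1}$ reassembles into an alternating block of length $i-1$ of the required shape, and the middle letter $m^{-1}$ takes the predicted value in the claimed formula; this finishes the induction.

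The one step that needs a little care, rather than an obstacle per se, is the bookkeeping in the last assembly: verifying that for $i$ odd the prefix $t_1t_0\cdot W_{i-1}$ (with $W_{i-1}$ of the form $t_1t_0\cdots t_1$) genuinely equals the desired $\underbrace{t_1t_0\cdots t_0}_{i-1}$, and likewise for $i$ even. Since $W_{i-1}$ is an alternating word starting with $t_1$, prepending $t_1t_0$ simply extends the alternating pattern by two letters and flips whether the tail is $t_0$ or $t_1$, which matches the change of parity from $i-1$ to $i$. The whole argument is parallel to the dihedral-reduction step in Corran--Picantin's treatment of the complex reflection group $G(e,e,n)$, which the author has already referenced as the model for the proof.
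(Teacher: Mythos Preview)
Your approach is exactly the one implicit in the paper's one-line proof: use $(R_3)$ to get the recursion $t_i=t_1t_0\,t_{i-1}^{-1}$ and unwind it. The palindromic observation $t_{i-1}=W_{i-1}\,m\,W_{i-1}^{-1}$ and the substitution giving $t_i=t_1t_0\,W_{i-1}\,m^{-1}\,W_{i-1}^{-1}$ are both correct.

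However, your final reassembly is mis-parsed. The word $t_1t_0\,W_{i-1}$ has length $i$, not $i-1$, and the letter $m^{-1}$ is \emph{not} the new middle letter (the formula's middle letter is a positive generator, never an inverse). The correct regrouping is
\[
t_i \;=\; \underbrace{\bigl(t_1t_0\,W_{i-1}\bigr)'}_{\text{first }i-1\text{ letters}}\;\cdot\;\ell\;\cdot\;\underbrace{m^{-1}\,W_{i-1}^{-1}}_{\text{length }i-1},
\]
where $\ell$ is the \emph{last} letter of the alternating word $t_1t_0\,W_{i-1}$, namely $\ell=t_1$ if $i$ is odd and $\ell=t_0$ if $i$ is even. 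One then checks that the first $i-1$ letters give the required prefix $W_i$, that $\ell$ is the predicted middle letter, and that $m^{-1}W_{i-1}^{-1}$ is the alternating suffix $W_i^{-1}$ (e.g.\ for $i$ odd, $m=t_0$ and $W_{i-1}^{-1}=t_1^{-1}t_0^{-1}\cdots t_1^{-1}$, so $m^{-1}W_{i-1}^{-1}=t_0^{-1}t_1^{-1}\cdots t_1^{-1}$ as required). With this correction the induction goes through; the idea is right, only the bookkeeping in the last sentence needs to be fixed.
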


\begin{proof}
    Using Relation $(R_3)$, it is straightforward to compute any $t_k$ in terms of $t_0$, $t_1$, and their inverses as given in the lemma. 
\end{proof}

\begin{lem}\label{LemR3InG}
    In $G(\tilde{D}_4^{(1,1)})$, let $t_i$ be defined as in Lemma~\ref{LemT_kInTermsT_0T_1} for $i \in \mathbb{Z}$. We have that $$t_it_{i-1} = t_1t_0.$$
\end{lem}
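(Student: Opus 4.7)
The plan is to exploit a compact closed form for $t_i$ and reduce the claim $t_i t_{i-1} = t_1 t_0$ to a purely formal cancellation in the free multiplication of $t_0, t_1$. Notably, no defining relations of $G(\tilde{D}_4^{(1,1)})$ beyond associativity are required, since $t_i$ is introduced here as a fixed word in $t_0, t_1, t_0^{-1}, t_1^{-1}$ via Lemma~\ref{LemT_kInTermsT_0T_1}.

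First I would set $z := t_1 t_0$ and rewrite the formula of Lemma~\ref{LemT_kInTermsT_0T_1} in the more convenient form
\[
t_{2k+1} \;=\; z^k\, t_1\, z^{-k}, \qquad t_{2k} \;=\; z^k\, t_0\, z^{-k},
\]
valid for all $k \in \mathbb{Z}$. For $k \geq 0$ this is immediate from the definition: the prefix $\underbrace{t_1 t_0 \cdots}_{i-1}$ is precisely $z^k$ followed by the ``middle letter'' $t_0$ or $t_1$ determined by the parity of $i$, while the suffix is precisely $z^{-k}$. The case $k < 0$ (that is, $i \leq 0$) follows symmetrically from the analogous statement in Lemma~\ref{LemT_kInTermsT_0T_1}, or equivalently from the observation that the recursion $t_i = z\, t_{i-2}\, z^{-1}$ is invertible in $i$.

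Given the reformulation, the identity is verified in two cases. If $i = 2k+1$ is odd, then
\[
t_i t_{i-1} \;=\; z^k t_1 z^{-k}\cdot z^k t_0 z^{-k} \;=\; z^k (t_1 t_0) z^{-k} \;=\; z \;=\; t_1 t_0.
\]
If $i = 2k$ is even, then
\[
t_i t_{i-1} \;=\; z^k t_0 z^{-k}\cdot z^{k-1} t_1 z^{-(k-1)} \;=\; z^k (t_0 z^{-1} t_1) z^{-(k-1)} \;=\; z \;=\; t_1 t_0,
\]
since $t_0 z^{-1} t_1 = t_0 \cdot t_0^{-1} t_1^{-1} \cdot t_1 = 1$.

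The main obstacle is purely bookkeeping: one must verify that the exponent of $z$ matches the number of factors in the prefix and suffix of Lemma~\ref{LemT_kInTermsT_0T_1}, and that the parity of the middle letter is correct in both branches and in both signs of $k$. Once the compact form $t_i = z^k t_\varepsilon z^{-k}$ (with $\varepsilon \in \{0,1\}$ chosen by parity) is secured, the remainder is a two-line cancellation that holds at the level of words in the free group on $t_0, t_1$, and hence in $G(\tilde{D}_4^{(1,1)})$.
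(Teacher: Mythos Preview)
Your proof is correct and follows essentially the same approach as the paper: write out the explicit words for $t_i$ and $t_{i-1}$ from Lemma~\ref{LemT_kInTermsT_0T_1} and observe that their product collapses to $t_1t_0$ by free cancellation. Your introduction of $z = t_1t_0$ and the compact form $t_{2k+\varepsilon} = z^k t_\varepsilon z^{-k}$ makes the cancellation slightly more transparent than the paper's direct multiplication of the alternating words, but the underlying argument is the same.
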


\begin{proof}
    Let $i$ odd. From Lemma~\ref{LemT_kInTermsT_0T_1}, we have that $$t_i = \underset{i-1}{\underbrace{t_1t_0 \cdots t_0}}\ t_1\ \underset{i-1}{\underbrace{t_0^{-1}t_1^{-1} \cdots t_1^{-1}}}$$ and $$t_{i-1} = \underset{i-2}{\underbrace{t_1t_0 \cdots t_1}}\ t_0\ \underset{i-2}{\underbrace{t_1^{-1}t_0^{-1} \cdots t_1^{-1}}}.$$
    Multiplying these two expressions, we get $t_it_{i-1} = t_1t_0$. The case $i$ even is similar.
\end{proof}

\begin{lem}\label{LemR1InG}
    In $G(\tilde{D}_4^{(1,1)})$, let $t_i$ be defined as in Lemma~\ref{LemT_kInTermsT_0T_1} for $i \in \mathbb{Z}$. We have that $$s_3t_is_3 = t_is_3t_i,$$ for any $i \in \mathbb{Z}$.
\end{lem}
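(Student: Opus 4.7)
The plan is to exploit the fact that, by $(P_1)$ for $j=3$ and $(P_3)$ for $i=3$, the three elements $s_3, t_0, t_1 \in G(\tilde{D}_4^{(1,1)})$ satisfy exactly the defining relations of Shi's presentation (Definition~\ref{Def_PresShi_TildeA}) of the affine Artin group $\art(\tilde{A}_2)$: for $n=3$, Shi's relations $(1)$, $(2)$, $(4)$ are vacuous, while $(3)$ reduces to $(P_1)$ and $(5)$ reduces to $(P_3)$. Hence the assignment $r_3 \mapsto s_3$, $t_0 \mapsto t_0$, $t_1 \mapsto t_1$ extends to a well-defined group homomorphism $\varphi \colon \art(\tilde{A}_2) \to G(\tilde{D}_4^{(1,1)})$.

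Next I would invoke the equivalence of Shi's presentation with the Corran--Lee--Lee presentation (Definition~\ref{Def_PresCorranLeeLee_TildeA}) recalled in Section~\ref{SectionAffineArtinGroupTildeA}. Under that equivalence, $\art(\tilde{A}_2)$ is also presented with generators $\widetilde{t}_i$ ($i \in \mathbb{Z}$) together with $r_3$, and its defining relations include both the braid relation $r_3 \widetilde{t}_i r_3 = \widetilde{t}_i r_3 \widetilde{t}_i$ (relation $(R_1)$) for every $i \in \mathbb{Z}$ and the recursion $\widetilde{t}_{i+1} \widetilde{t}_i = \widetilde{t}_1 \widetilde{t}_0$ (relation $(R_3)$). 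Since the isomorphism between the two presentations identifies $\widetilde{t}_0, \widetilde{t}_1$ with Shi's $t_0, t_1$, iterating the recursion expresses every $\widetilde{t}_i$ in terms of $t_0, t_1$ by exactly the formula of Lemma~\ref{LemT_kInTermsT_0T_1}. Applying $\varphi$ to the relation $r_3 \widetilde{t}_i r_3 = \widetilde{t}_i r_3 \widetilde{t}_i$ then yields $s_3 t_i s_3 = t_i s_3 t_i$ in $G(\tilde{D}_4^{(1,1)})$, since $\varphi(\widetilde{t}_i)$ is precisely the element $t_i$ prescribed by Lemma~\ref{LemT_kInTermsT_0T_1}.

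The main obstacle is verifying that $\varphi(\widetilde{t}_i)$ coincides with the element $t_i$ defined in Lemma~\ref{LemT_kInTermsT_0T_1}. This is essentially a uniqueness statement: both sequences are obtained from the common initial pair $(t_0, t_1)$ by iterating the recursion $t_{i+1} = (t_1 t_0)\, t_i^{-1}$ (and its backwards analogue for $i < 0$), so they agree term by term. A more self-contained alternative would be a direct induction on $|i|$, with base cases $i = 0, 1$ supplied by $(P_1)$ and the inductive step exploiting Lemma~\ref{LemR3InG} together with $(P_3)$; this route is feasible but quickly becomes cumbersome because of the length-four braid relation between $s_3$ and $t_1 t_0$, so the morphism argument above is considerably more transparent.
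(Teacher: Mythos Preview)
Your argument is correct. The homomorphism $\varphi$ is well-defined because Shi's relations for $\art(\tilde{A}_2)$ reduce exactly to $(P_1)$ and $(P_3)$ with $j=i=3$, and the identification of $\varphi(\widetilde{t}_i)$ with the element $t_i$ of Lemma~\ref{LemT_kInTermsT_0T_1} follows since both are determined by the same recursion from the common initial data $t_0,t_1$.

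The paper's own proof is not a direct computation either: it observes that Lemma~\ref{LemR3InG} gives $t_it_{i-1}=t_1t_0$ in $G(\tilde{D}_4^{(1,1)})$ and then invokes Lemma~2.9(a) of Corran--Picantin, whose proof (an induction using the braid relations with $t_0,t_1$ together with the cycling relation) carries over verbatim. Your route packages the same ingredients differently: instead of transplanting the Corran--Picantin computation, you transport the \emph{relation} through a homomorphism, using that the equivalence of Definitions~\ref{Def_PresShi_TildeA} and~\ref{Def_PresCorranLeeLee_TildeA} has already absorbed that computation. The advantage of your approach is that it stays entirely within the framework set up in Section~\ref{SectionAffineArtinGroupTildeA} and avoids an external citation; the paper's approach is terser but leans on the reader having access to \cite{CorranPicantin}. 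One minor point: the relations you label $(R_1)$ and $(R_3)$ are, in Definition~\ref{Def_PresCorranLeeLee_TildeA}, numbered $(3)$ and $(5)$; the labels $(R_1)$, $(R_3)$ belong to Proposition~\ref{Prop_New_PresD4}.
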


\begin{proof}
    By Lemma~\ref{LemR3InG}, we have that $t_it_{i-1} = t_jt_{j-1} =t_1t_0$ for all $i,j \in \mathbb{Z}$, then the proof of the lemma is the same as Lemma~2.9(a) in \cite{CorranPicantin}.
\end{proof}

\begin{proof}[Proof of Proposition~\ref{Prop_New_PresD4}.]

We first show that the natural map from $G(\tilde{D}_4^{(1,1)})$ to $G'(\tilde{D}_4^{(1,1)})$ that maps $t_i \mapsto t_i$ for $i=0,1$ and $s_j \mapsto s_j$ for $j\in\{1,2,3,4\}$ extends to a group homomorphism.

Actually, Relation $(P_1)$ is a particular case of Relation $(R_1)$. Relation $(P_2)$ is the same as $(R_3)$. By Lemma~\ref{LemP3InG'}, Relation $(P_3)$ holds in $G'(\tilde{D}_4^{(1,1)})$.

Next, define the map from $G'(\tilde{D}_4^{(1,1)})$ to $G(\tilde{D}_4^{(1,1)})$ that maps each $t_i$ to its expression over $\{t_0,t_1,t_0^{-1},t_1^{-1}\}$ given in Lemma~\ref{LemT_kInTermsT_0T_1}, and maps $s_j \mapsto s_j$ for $j\in\{1,2,3,4\}$. We show that this map extends to a group homomorphism.

Actually, by Lemma~\ref{LemR1InG}, Relation $(R_1)$ holds in $G$ for all $i \in \mathbb{Z}$. Relation $(R_2)$ is the same as $(P_2)$. By Lemma~\ref{LemR3InG}, Relation $(R_3)$ holds in $G$.

This shows that the groups $G(\tilde{D}_4^{(1,1)})$ and $G'(\tilde{D}_4^{(1,1)})$ are isomorphic.

The generating set of the new presentation of $\hyp{(\tilde{D}_4^{(1,1)})}$ consists of the same generators as the one in Definition~\ref{Def_PresD4_Yamada}, as well as some conjugates (see Lemma~\ref{LemT_kInTermsT_0T_1}). Since it is the case for the presentation in Definition~\ref{Def_PresD4_Yamada}, adding the quadratic relations to the new presentation gives a presentation for the group $\hyp{(\tilde{D}_4^{(1,1)})}$.
\end{proof}

We extend our result to all elliptic Artin groups of types $\tilde{E}_n^{(1,1)}$ for $n=6,7,8$. This covers all the simply laced cases of elliptic Weyl groups.

\begin{defn}[Presentations for types $\tilde{E}_n^{(1,1)}$]\label{DefPres_Elliptic_En_Yamada}
    The elliptic Artin group $\art(\tilde{E}_n^{(1,1)})$ for $n=6,7,8$ is isomorphic to the group $G(\tilde{E}_n^{(1,1)})$ defined by a presentation with generators $t_0, t_1, s_1, s_2, \cdots, s_n$, and the relations can be illustrated by the elliptic Dynkin diagrams shown in Figure~\ref{Fig_Elliptic_E_6,7,8}. 
    The relations between the generators $t_0,t_1,s_1,s_2$, and $s_3$ are similar to those of Definition~\ref{Def_PresD4_Yamada}. The other relations are standard Coxeter relations.

    Adding the quadratic relations for all the generators, we obtain a presentation of the group $\hyp(\tilde{E}_n^{(1,1)})$ for $n=6,7,8$.
\end{defn}

\begin{figure}[H]
        \begin{tikzpicture}[mynode/.style={circle, draw, fill=black!50, inner sep=0pt, minimum width=4pt},scale=0.8]
        \node[mynode,label=below:$t_0$] (t0) at (0,0) {};
        \node[mynode,label=below:$s_3$] (s3) at (2,0) {};
        \node[mynode,label=above:$s_1$] (s1) at (-2,1) {};
        \node[mynode,label=above:$t_1$] (t1) at (0,2) {};
        \node[mynode,label=above:$s_2$] (s2) at (2,2) {};

        \node[mynode, label=above:$s_4$] (s4) at (-4,1) {};
        \node[mynode, label=above:$s_5$] (s5) at (4,2) {};
        \node[mynode, label=below:$s_6$] (s6) at (4,0) {};

        \node[] (n6) at (-9,1) {\underline{Elliptic diagram $\tilde{E}_6^{(1,1)}$}:};

        \draw[-] (t0) to (s1);
        \draw[-] (t0) to (s2);
        \draw[-] (t0) to (s3);
        \draw[-] (t1) to (s1);
        \draw[-] (t1) to (s2);
        \draw[-] (t1) to (s3);
        \draw[dashed, double, -] (t0) to (t1);

        \draw[-] (s1) to (s4);
        \draw[-] (s2) to (s5);
        \draw[-] (s3) to (s6);
        \end{tikzpicture}

        \begin{tikzpicture}[mynode/.style={circle, draw, fill=black!50, inner sep=0pt, minimum width=4pt},scale=0.7]
        \node[mynode,label=below:$t_0$] (t0) at (0,0) {};
        \node[mynode,label=below:$s_3$] (s3) at (2,0) {};
        \node[mynode,label=above:$s_1$] (s1) at (-2,1) {};
        \node[mynode,label=above:$t_1$] (t1) at (0,2) {};
        \node[mynode,label=above:$s_2$] (s2) at (2,2) {};

        \node[mynode,label=above:$s_4$] (s4) at (4,2) {};
        \node[mynode,label=above:$s_5$] (s5) at (6,2) {};
        \node[mynode,label=below:$s_6$] (s6) at (4,0) {};
        \node[mynode,label=below:$s_7$] (s7) at (6,0) {};

        \node[] (n7) at (-9,1) {\underline{Elliptic diagram $\tilde{E}_7^{(1,1)}$}:};

        \draw[-] (t0) to (s1);
        \draw[-] (t0) to (s2);
        \draw[-] (t0) to (s3);
        \draw[-] (t1) to (s1);
        \draw[-] (t1) to (s2);
        \draw[-] (t1) to (s3);
        \draw[dashed, double, -] (t0) to (t1);

        \draw[-] (s2) to (s4);
        \draw[-] (s4) to (s5);
        \draw[-] (s3) to (s6);
        \draw[-] (s6) to (s7);
        \end{tikzpicture}

        \begin{tikzpicture}[mynode/.style={circle, draw, fill=black!50, inner sep=0pt, minimum width=4pt},scale=0.7]
        \node[mynode,label=below:$t_0$] (t0) at (0,0) {};
        \node[mynode,label=below:$s_3$] (s3) at (2,0) {};
        \node[mynode,label=above:$s_1$] (s1) at (-2,1) {};
        \node[mynode,label=above:$t_1$] (t1) at (0,2) {};
        \node[mynode,label=above:$s_2$] (s2) at (2,2) {};

        \node[mynode,label=above:$s_4$] (s4) at (4,2) {};
        \node[mynode,label=below:$s_5$] (s5) at (4,0) {};
        \node[mynode,label=below:$s_6$] (s6) at (6,0) {};
        \node[mynode,label=below:$s_7$] (s7) at (8,0) {};
        \node[mynode,label=below:$s_8$] (s8) at (10,0) {};

        \node[] (n8) at (-6,1) {\underline{Elliptic diagram $\tilde{E}_8^{(1,1)}$}:};

        \draw[-] (t0) to (s1);
        \draw[-] (t0) to (s2);
        \draw[-] (t0) to (s3);
        \draw[-] (t1) to (s1);
        \draw[-] (t1) to (s2);
        \draw[-] (t1) to (s3);
        \draw[dashed, double, -] (t0) to (t1);

        \draw[-] (s2) to (s4);
        \draw[-] (s3) to (s5);
        \draw[-] (s5) to (s6);
        \draw[-] (s6) to (s7);
        \draw[-] (s7) to (s8);
        \end{tikzpicture}
    \caption{Elliptic Dynkin diagrams of types $\tilde{E}_n^{(1,1)}$ for $n=6,7,8$.}
    \label{Fig_Elliptic_E_6,7,8}
\end{figure}
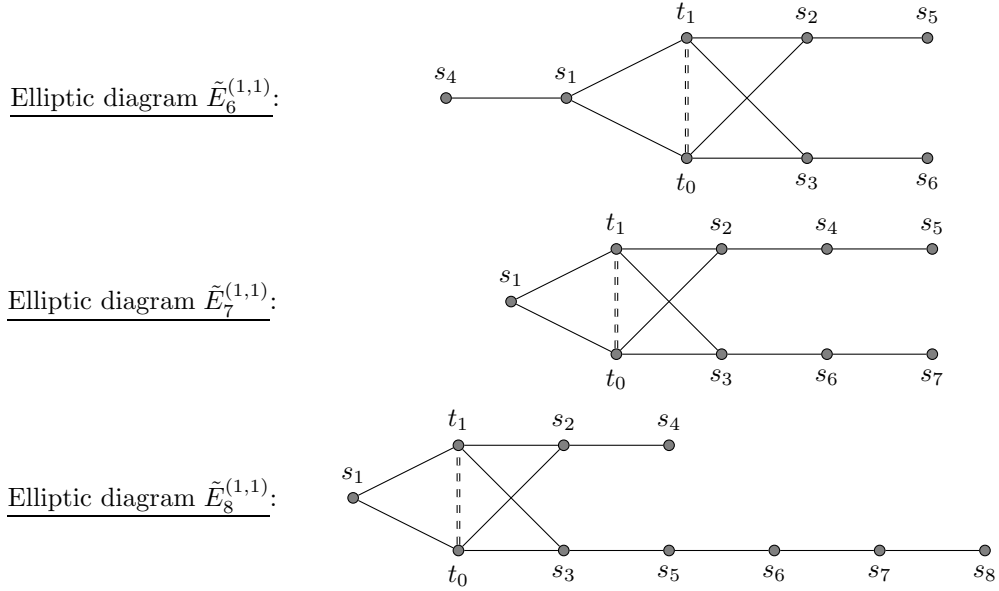

\begin{prop}[New presentations for types $\tilde{E}_n^{(1,1)}$]\label{Prop_NewPres_Elliptic_En}
    The elliptic Artin group $\art{(\tilde{E}_n^{(1,1)})}$ of type $\tilde{E}_n^{(1,1)}$ for $n=6,7,8$ is isomorphic to the group $G'(\tilde{E}_n^{(1,1)})$ defined by a presentation with generators $t_i$ for $i\in \mathbb{Z}$ and $s_1,s_2, \cdots, s_n$ and the  relations can be illustrated by the diagrams given in Figure~\ref{Fig_New_Diagram_En}. The relations between the generators $t_i$ ($i \in \mathbb{Z}$), $s_1,s_2$, and $s_3$ are similar to those given in Proposition~\ref{Prop_New_PresD4}. The other relations are standard Coxeter relations.
    
    Adding the quadratic relations for all the generators, we obtain a presentation of the group $\hyp{(\tilde{E}_n^{(1,1)})}$ for $n=6,7,8$.
\end{prop}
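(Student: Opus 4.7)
The plan is to adapt the proof of Proposition~\ref{Prop_New_PresD4} essentially verbatim. The elliptic $E_n^{(1,1)}$ diagrams are built by attaching additional nodes $s_4,\ldots,s_n$ to the $\tilde{D}_4^{(1,1)}$-like core only along the $s_i$-row (that is, $s_k$ for $k\geq 4$ satisfies standard Coxeter relations with certain $s_j$ and commutes with both $t_0$ and $t_1$). Consequently, the entire subpresentation involving $\{t_0,t_1,s_1,s_2,s_3\}$ in $G(\tilde{E}_n^{(1,1)})$ coincides with the $\tilde{D}_4^{(1,1)}$-presentation restricted to three of the four $s$-generators, and the same is true on the side of $G'(\tilde{E}_n^{(1,1)})$ once the generators $t_i$ are enlarged to $i \in \mathbb{Z}$.

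First, I would define the natural map $G(\tilde{E}_n^{(1,1)}) \to G'(\tilde{E}_n^{(1,1)})$ sending $t_i \mapsto t_i$ for $i=0,1$, and $s_j \mapsto s_j$ for $1 \leq j \leq n$, and verify that every defining relation of $G(\tilde{E}_n^{(1,1)})$ holds in $G'(\tilde{E}_n^{(1,1)})$. For the relations in $\{t_0,t_1,s_1,s_2,s_3\}$ of type $(P_1)$--$(P_3)$, this is exactly the verification carried out in the proof of Proposition~\ref{Prop_New_PresD4}, using $(R_1)$, $(R_3)$, and Lemma~\ref{LemP3InG'} (which applies unchanged for $i \in \{1,2,3\}$). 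The remaining standard Coxeter relations among the $s_j$'s are defining relations of $G'(\tilde{E}_n^{(1,1)})$ as well, hence hold tautologically.

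Conversely, I would define $G'(\tilde{E}_n^{(1,1)}) \to G(\tilde{E}_n^{(1,1)})$ by sending each $t_i$ ($i \in \mathbb{Z}$) to the word in $t_0,t_1,t_0^{-1},t_1^{-1}$ specified by Lemma~\ref{LemT_kInTermsT_0T_1}, and $s_j \mapsto s_j$. Lemma~\ref{LemR3InG} shows that relation $(R_3)$ holds in $G(\tilde{E}_n^{(1,1)})$, and Lemma~\ref{LemR1InG} gives $(R_1)$ for $s_1,s_2,s_3$ (the proof referenced from Corran--Picantin only uses braid relations between a single $s$ and $t_0,t_1$, which are all present). For the peripheral generators $s_k$ with $k\geq 4$, I need the extra check that $s_k$ commutes with every $t_i$ in $G(\tilde{E}_n^{(1,1)})$; but this is immediate because $s_k$ commutes with both $t_0$ and $t_1$ by hypothesis, and $t_i$ is a word in $t_0^{\pm 1}, t_1^{\pm 1}$. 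The remaining $s$-relations in $G'(\tilde{E}_n^{(1,1)})$ are identical to those in $G(\tilde{E}_n^{(1,1)})$.

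The two maps are mutually inverse on generators (after applying Lemma~\ref{LemT_kInTermsT_0T_1}), so they induce inverse isomorphisms. The hyperbolic statement is obtained exactly as in the $\tilde{D}_4^{(1,1)}$ case: the enlarged generating set consists of the original generators together with conjugates of $t_0, t_1$ coming from Lemma~\ref{LemT_kInTermsT_0T_1}, so adding the quadratic relations in the new presentation is equivalent to adding them in the old one. I do not expect any genuine obstacle, since all the work is already done by the lemmas of the $\tilde{D}_4^{(1,1)}$ case; the only substantive addition is the commutation check between peripheral $s_k$'s and the extended family $\{t_i\}_{i \in \mathbb{Z}}$, which reduces at once to the commutation with $t_0$ and $t_1$.
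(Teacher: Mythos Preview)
Your proposal is correct and follows essentially the same approach as the paper's proof: both define mutually inverse maps exactly as in the $\tilde{D}_4^{(1,1)}$ case, observe that the relations on $\{t_0,t_1,s_1,s_2,s_3\}$ are handled verbatim by the earlier lemmas, and single out as the only new ingredient the commutation of the peripheral $s_k$ ($k\geq 4$) with every $t_i$, which is immediate since each $t_i$ is a word in $t_0^{\pm1},t_1^{\pm1}$.
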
 

\begin{figure}[H]
    \centering
    \begin{tikzpicture}[mynode/.style={circle, draw, fill=black!50, inner sep=0pt, minimum width=4pt},scale=0.9]
        \node[mynode,label=left:$t_0$] (t0) at (0,0) {};
        \node[mynode,label=left:$t_1$] (t1) at (0,1) {};
        \node[mynode,label=left:$t_2$] (t2) at (0,2) {};
        \node[mynode,label=left:$t_3$] (t3) at (0,3) {};
        \node[mynode,label=left:$t_{-1}$] (tm1) at (0,-1) {};
        \node[mynode,label=below:$s_3$] (s3) at (2,0) {};
        \node[mynode,label=above:$s_1$] (s1) at (-2,1) {};
        \node[mynode,label=above:$s_2$] (s2) at (2,2) {};

        \node[mynode,label=above:$s_4$] (s4) at (-4,1) {};
        \node[mynode,label=above:$s_5$] (s5) at (4,2) {};
        \node[mynode,label=below:$s_6$] (s6) at (4,0) {};

        \node[] (n6) at (-9,1) {\underline{Type $\tilde{E}_6^{(1,1)}$}:};

        \draw[dashed,-] (0,-1) -- (0,-2);
        \draw[dashed,-] (0,3) -- (0,4);
        \draw[-] (t0) to (t1);
        \draw[-] (t1) to (t2);
        \draw[-] (t2) to (t3);
        \draw[-] (t0) to (tm1);

        \draw[-] (s1) to (t0);
        \draw[-] (s1) to (t1);
        \draw[-] (s1) to (t2);
        \draw[-] (s1) to (t3);
        \draw[-] (s1) to (tm1);

        \draw[-] (s3) to (t0);
        \draw[-] (s3) to (t1);
        \draw[-] (s3) to (t2);
        \draw[-] (s3) to (t3);
        \draw[-] (s3) to (tm1);

        \draw[-] (s2) to (t0);
        \draw[-] (s2) to (t1);
        \draw[-] (s2) to (t2);
        \draw[-] (s2) to (t3);
        \draw[-] (s2) to (tm1);

        \draw[-] (s1) to (s4);
        \draw[-] (s2) to (s5);
        \draw[-] (s3) to (s6);

        \draw[dashed,-] (-1,-0.8) to (-1,-1.5);
        \draw[dashed,-] (1,-0.8) to (1,-1.5);
        \draw[dashed,-] (-1,2.8) to (-1,3.5);
        \draw[dashed,-] (1,2.8) to (1,3.5);
    \end{tikzpicture}

    \vspace{0.5cm}

    \begin{tikzpicture}[mynode/.style={circle, draw, fill=black!50, inner sep=0pt, minimum width=4pt},scale=0.9]
        \node[mynode,label=left:$t_0$] (t0) at (0,0) {};
        \node[mynode,label=left:$t_1$] (t1) at (0,1) {};
        \node[mynode,label=left:$t_2$] (t2) at (0,2) {};
        \node[mynode,label=left:$t_3$] (t3) at (0,3) {};
        \node[mynode,label=left:$t_{-1}$] (tm1) at (0,-1) {};
        \node[mynode,label=below:$s_3$] (s3) at (2,0) {};
        \node[mynode,label=above:$s_1$] (s1) at (-2,1) {};
        \node[mynode,label=above:$s_2$] (s2) at (2,2) {};

        \node[mynode,label=above:$s_4$] (s4) at (4,2) {};
        \node[mynode,label=above:$s_5$] (s5) at (6,2) {};
        \node[mynode,label=below:$s_6$] (s6) at (4,0) {};
        \node[mynode,label=below:$s_7$] (s7) at (6,0) {};

        \node[] (n7) at (-7,1) {\underline{Type $\tilde{E}_7^{(1,1)}$}:};

        \draw[dashed,-] (0,-1) -- (0,-2);
        \draw[dashed,-] (0,3) -- (0,4);
        \draw[-] (t0) to (t1);
        \draw[-] (t1) to (t2);
        \draw[-] (t2) to (t3);
        \draw[-] (t0) to (tm1);

        \draw[-] (s1) to (t0);
        \draw[-] (s1) to (t1);
        \draw[-] (s1) to (t2);
        \draw[-] (s1) to (t3);
        \draw[-] (s1) to (tm1);

        \draw[-] (s3) to (t0);
        \draw[-] (s3) to (t1);
        \draw[-] (s3) to (t2);
        \draw[-] (s3) to (t3);
        \draw[-] (s3) to (tm1);

        \draw[-] (s2) to (t0);
        \draw[-] (s2) to (t1);
        \draw[-] (s2) to (t2);
        \draw[-] (s2) to (t3);
        \draw[-] (s2) to (tm1);

        \draw[-] (s2) to (s4);
        \draw[-] (s4) to (s5);
        \draw[-] (s3) to (s6);
        \draw[-] (s7) to (s6);

        \draw[dashed,-] (-1,-0.8) to (-1,-1.5);
        \draw[dashed,-] (1,-0.8) to (1,-1.5);
        \draw[dashed,-] (-1,2.8) to (-1,3.5);
        \draw[dashed,-] (1,2.8) to (1,3.5);
    \end{tikzpicture}

    \vspace{0.5cm}

    \begin{tikzpicture}[mynode/.style={circle, draw, fill=black!50, inner sep=0pt, minimum width=4pt},scale=0.85]
        \node[mynode,label=left:$t_0$] (t0) at (0,0) {};
        \node[mynode,label=left:$t_1$] (t1) at (0,1) {};
        \node[mynode,label=left:$t_2$] (t2) at (0,2) {};
        \node[mynode,label=left:$t_3$] (t3) at (0,3) {};
        \node[mynode,label=left:$t_{-1}$] (tm1) at (0,-1) {};
        \node[mynode,label=below:$s_3$] (s3) at (2,0) {};
        \node[mynode,label=above:$s_1$] (s1) at (-2,1) {};
        \node[mynode,label=above:$s_2$] (s2) at (2,2) {};

        \node[mynode,label=above:$s_4$] (s4) at (4,2) {};
        \node[mynode,label=below:$s_5$] (s5) at (4,0) {};
        \node[mynode,label=below:$s_6$] (s6) at (6,0) {};
        \node[mynode,label=below:$s_7$] (s7) at (8,0) {};
        \node[mynode,label=below:$s_8$] (s8) at (10,0) {};

        \node[] (n8) at (-4,1) {\underline{Type $\tilde{E}_8^{(1,1)}$}:};

        \draw[dashed,-] (0,-1) -- (0,-2);
        \draw[dashed,-] (0,3) -- (0,4);
        \draw[-] (t0) to (t1);
        \draw[-] (t1) to (t2);
        \draw[-] (t2) to (t3);
        \draw[-] (t0) to (tm1);

        \draw[-] (s1) to (t0);
        \draw[-] (s1) to (t1);
        \draw[-] (s1) to (t2);
        \draw[-] (s1) to (t3);
        \draw[-] (s1) to (tm1);

        \draw[-] (s3) to (t0);
        \draw[-] (s3) to (t1);
        \draw[-] (s3) to (t2);
        \draw[-] (s3) to (t3);
        \draw[-] (s3) to (tm1);

        \draw[-] (s2) to (t0);
        \draw[-] (s2) to (t1);
        \draw[-] (s2) to (t2);
        \draw[-] (s2) to (t3);
        \draw[-] (s2) to (tm1);

        \draw[-] (s2) to (s4);
        \draw[-] (s3) to (s5);
        \draw[-] (s5) to (s6);
        \draw[-] (s6) to (s7);
        \draw[-] (s7) to (s8);

        \draw[dashed,-] (-1,-0.8) to (-1,-1.5);
        \draw[dashed,-] (1,-0.8) to (1,-1.5);
        \draw[dashed,-] (-1,2.8) to (-1,3.5);
        \draw[dashed,-] (1,2.8) to (1,3.5);
    \end{tikzpicture}
    \caption{New diagram presentations for types $\tilde{E}_n^{(1,1)}$ for $n=6,7,8$.}
    \label{Fig_New_Diagram_En}
\end{figure}

\begin{proof}[Proof of Proposition~\ref{Prop_NewPres_Elliptic_En}]
    The proof is similar to the one of Proposition~\ref{Prop_New_PresD4}.

    Define the natural map from $G(\tilde{E}_n^{(1,1)})$ to $G'(\tilde{E}_n^{(1,1)})$ that maps $t_i \mapsto t_i$ for $i=0,1$ and $s_j \mapsto s_j$ for $1 \leq j \leq n$. 
    For all the relations involving $t_0,t_1,s_1,s_2$, and $s_3$, the proof is the same as for Proposition~\ref{Prop_New_PresD4}. For all the other relations between $s_4, s_5, \cdots, s_n$, they are identical in $G(\tilde{E}_n^{(1,1)})$ and $G'(\tilde{E}_n^{(1,1)})$. 
    
    Define now the map from $G'(\tilde{E}_n^{(1,1)})$ to $G(\tilde{E}_n^{(1,1)})$ that maps each $t_i$ with $i \in \mathbb{Z}$ to its expression over $\{t_0,t_1,t_0^{-1},t_1^{-1}\}$ as given in Lemma~\ref{LemT_kInTermsT_0T_1}, and maps $s_j \mapsto s_j$ for $1 \leq j \leq n$.
    Again, for all the relations involving $t_0,t_1,s_1,s_2$, and $s_3$, the proof is the same as for Proposition~\ref{Prop_New_PresD4}. Also, all the other relations between $s_4, s_5, \cdots, s_n$ are identical in $G(\tilde{E}_n^{(1,1)})$ and $G'(\tilde{E}_n^{(1,1)})$. We only still need to check that in $G(\tilde{E}_n^{(1,1)})$, each $s_j$ for $4 \leq j \leq n$ commutes with each of the expressions of $t_k$ given in Lemma~\ref{LemT_kInTermsT_0T_1}, but this is straightforward since $s_j$ for $4 \leq j \leq n$ commutes with $t_0$ and $t_1$. 
    
    This shows that the groups $G(\tilde{E}_n^{(1,1)})$ and $G'(\tilde{E}_n^{(1,1)})$ are isomorphic.

    The proof of the last statement of the proposition is the same as the last statement of Proposition~\ref{Prop_New_PresD4}.
\end{proof}  

\begin{rem}
    Adding the relation that describes the order of the Coxeter transformation in $W(\Gamma)$ to the presentation of $\hyp{(\Gamma)}$ in Propositions~\ref{Prop_New_PresD4} and \ref{Prop_NewPres_Elliptic_En}, we obtain a new presentation of the elliptic Weyl group $W(\Gamma)$. This is straightforward from the central extension mentioned in the second paragraph of Section~\ref{SectionIntroduction}.
\end{rem}

%%%%%%%%%%%%%%%%%%%%%%%%%%%%%%%%%%%%%%%%%%%%%%%%
\section{Cancellativity of elliptic Artin monoids}\label{SectionCancelEllipticArtinMon}
%%%%%%%%%%%%%%%%%%%%%%%%%%%%%%%%%%%%%%%%%%%%%%%%

Recall that a monoid $M$ is said to be cancellative if it satisfies the property:
\begin{center} 
$u v w = u v' w$ for $u,v,w \in M$ implies $v = v'$ in $M$.
\end{center}

In this section, we are considering the monoids defined by the same positive presentations as $G(\tilde{D}_4^{(1,1)})$, $G(\tilde{E}_n^{(1,1)})$, $G'(\tilde{D}_4^{(1,1)})$, $G'(\tilde{E}_n^{(1,1)})$, for $n=6,7,8$ of Definitions~\ref{Def_PresD4_Yamada}, \ref{DefPres_Elliptic_En_Yamada}, and Propositions~\ref{Prop_New_PresD4}, \ref{Prop_NewPres_Elliptic_En}, respectively. These monoids will be denoted by $M(\tilde{D}_4^{(1,1)})$, $M(\tilde{E}_n^{(1,1)})$, $M'(\tilde{D}_4^{(1,1)})$, and $M'(\tilde{E}_n^{(1,1)})$, respectively.

Consider the monoids $M(\tilde{D}_4^{(1,1)})$ and $M(\tilde{E}_n^{(1,1)})$ defined by the presentations given in Definitions~\ref{Def_PresD4_Yamada} and \ref{DefPres_Elliptic_En_Yamada}. Saito \cite{SaitoHigherHomotopy, Saito2ndHomotopyNonCancellative} showed that these monoids are not cancellative.  

We will show in this section that the monoids $M'(\tilde{D}_4^{(1,1)})$ and $M'(\tilde{E}_n^{(1,1)})$ defined by our presentations in Propositions~\ref{Prop_New_PresD4} and \ref{Prop_NewPres_Elliptic_En} are indeed cancellative. This is a big step towards the construction of Garside structures related to elliptic Artin monoids and groups.

In order to prove cancellativity, we will use some combinatorial techniques that were developed by Dehornoy, namely the word reversing technique. We will start by recalling this technique from \cite{DehornoyCompletePosPres}.

Let $M$ be a monoid defined by a positive presentation $\langle S\ |\ R \rangle$ with set of generators $S$ and set of positive relations $R$. Let $S^{*}$ be the free monoid generated by $S$ and let $\epsilon$ be the empty word.

\begin{defn}[Word reversing]\label{Def_WordReversing}
    Let $w$ and $w'$ be two words over $S \cup S^{-1}$. We say that we right-reverse $w$ to $w'$ in one step and write $w \curvearrowright_{r}^{(1)} w'$ if $w'$ is obtained from $w$ by
    \begin{itemize}
        \item either deleting some subword $u^{-1}u$ for $u \in S^{*} \setminus \{0\}$,
        \item or replacing some subword $u^{-1}v$ for $u,v\in S^{*} \setminus \{0\}$ by a word $v'u'^{-1}$ such that $uv'=vu'$ is a relation in $R$.
    \end{itemize}
    Let $w_0, w_1, \cdots$ be a sequence of words over $S$. We say that it is a right-reversing sequence of words if we have $w_i \curvearrowright_{r}^{(1)} w_{i+1}$ for every $i \geq 0$. Let $k \geq 1$. We write $w \curvearrowright_{r}^{(k)} w'$ if there exists a right-reversing sequence of length $k$ from $w$ to $w'$. We say that $w$ right-reverses to $w'$ and write $w \curvearrowright_{r} w'$ if $w \curvearrowright_{r}^{(k)} w'$ for some $k$.
\end{defn}

Symmetrically, we say that $w$ left-reverses to $w'$ if $w'$ is obtained from $w$ by iteratively deleting some subword $uu^{-1}$ and replacing subwords $uv^{-1}$ by words $v'^{-1}u'$ such that $v'u = u'v$ is a relation in $R$.

To every right-reversing sequence of words $w_0, w_1, \cdots$, we associate a planar graph as follows. First, we associate with $w_0$ a path labelled with the successive letters of $w_0$: we associate to every positive letter $s$ a horizontal right-oriented edge labelled $s$, and to every negative letter $s^{-1}$ a vertical down-oriented edge labelled $s$. Then, successively from left to right we represent the words $w_1, w_2, \cdots$ as follows: 
\begin{itemize}
    \item If $w_{i+1}$ is obtained from $w_i$ by replacing $u^{-1}v$ by $v' u'^{-1}$, meaning that $uv' = vu'$ is a relation of the presentation, then the graph is completed according to this relation as follows.
    \begin{center}
    \begin{tikzpicture}
        \draw [-stealth] (0,0) to node[auto] {$v$} (1,0);
        \draw [-stealth] (0,0) to node[left] {$u$} (0,-1);

        \node[] (text) at (3,-0.5) {completed into};

        \draw [-stealth] (5,0) to node[auto] {$v$} (6,0);
        \draw [-stealth] (5,0) to node[left] {$u$} (5,-1);
        \draw [-stealth] (5,-1) to node[below] {$v'$} (6,-1);
        \draw [-stealth] (6,0) to node[right] {$u'$} (6,-1);
    \end{tikzpicture}
    \end{center}
    \item If $w_{i+1}$ is obtained from $w_i$ by deleting some subword $u^{-1}u$, then the empty word $\epsilon$ appears and the graph is completed by a non-oriented edge labelled $\epsilon$ as follows.
    \begin{center}
    \begin{tikzpicture}
        \draw [-stealth] (0,0) to node[auto] {$u$} (1,0);
        \draw [-stealth] (0,0) to node[left] {$u$} (0,-1);

        \node[] (text) at (3,-0.5) {completed into};

        \draw [-stealth] (5,0) to node[auto] {$u$} (6,0);
        \draw [-stealth] (5,0) to node[left] {$u$} (5,-1);
        \draw [-,bend right] (5,-1) to node[below] {$\epsilon$} (6,0);
    \end{tikzpicture}
    \end{center}
\end{itemize}

A symmetric construction is associated with left-reversing. 

\begin{exmp}
    Consider the monoid presentation given in Proposition~\ref{Prop_New_PresD4}. We have that 
    \begin{center}
    \begin{tabular}{lll}
       $\underline{t_2^{-1}s_3}s_3$  & $\curvearrowright_{r}^{(1)}$ & $s_3t_2s_3^{-1}\underline{t_2^{-1}s_3}$ by using $s_3t_2s_3 = t_2s_3t_2$\\
         & $\curvearrowright_{r}^{(1)}$ & $s_3t_2\underline{s_3^{-1}s_3}t_2s_3^{-1}t_2^{-1}$ by using $s_3t_2s_3 = t_2s_3t_2$\\
         & $\curvearrowright_{r}^{(1)}$ & $s_3t_2t_2s_3^{-1}t_2^{-1}$.
    \end{tabular}
    \end{center}
    The associated graph is as follows.
    \begin{center}
    \begin{tikzpicture}[scale=1.2]
    \draw [-stealth] (0,2) to node[left] {$t_2$} (0,0);
    \draw [-stealth] (0,2) to node[above] {$s_3$} (2,2);
    \draw [-stealth] (0,0) to node[below] {$s_3$} (1,0);
    \draw [-stealth] (1,0) to node[below] {$t_2$} (2,0);
    \draw [-stealth] (2,2) to node[left] {$t_2$} (2,1);
    \draw [-stealth] (2,1) to node[left] {$s_3$} (2,0);
    \draw [-stealth] (2,2) to node[auto] {$s_3$} (4,2);
    \draw [-stealth] (4,2) to node[auto] {$t_2$} (4,1.5);
    \draw [-stealth] (4,1.5) to node[auto] {$s_3$} (4,1);
    \draw [-stealth] (2,1) to node[auto] {$s_3$} (3,1);
    \draw [-stealth] (3,1) to node[auto] {$t_2$} (4,1);
    \draw [bend right] (2,0) to node[above] {$\epsilon$} (3,1);
    \end{tikzpicture}
    \end{center}
\end{exmp}

\begin{defn}[Complemented presentations]\label{Def_Complemented}
    The positive presentation $\langle S\ |\ R \rangle$ of a monoid $M$ is said to be right-complemented if for any $x,y \in S$, there is at most one relation in $R$ of the form $x \cdots = y \cdots$ and no relations of the form $x \cdots = x \cdots$.
    The left-complemented notion is defined symmetrically.
    We say that the presentation is complemented if it is right- and left-complemented. 
\end{defn}

Notice that the presentations of the monoids $M(\tilde{D}_4^{(1,1)})$ and $M(\tilde{E}_n^{(1,1)})$ as described in Definitions~\ref{Def_PresD4_Yamada} and \ref{DefPres_Elliptic_En_Yamada} are not complemented. 

\begin{lem}\label{Lem_NewPres_Complemented}
    The presentations of the monoids $M'(\tilde{D}_4^{(1,1)})$ and $M'(\tilde{E}_n^{(1,1)})$ as described in Propositions~\ref{Prop_New_PresD4} and \ref{Prop_NewPres_Elliptic_En} are complemented.
\end{lem}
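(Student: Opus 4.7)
The plan is to unpack Definition~\ref{Def_Complemented} and carry out a direct case check on the relation families of Propositions~\ref{Prop_New_PresD4} and \ref{Prop_NewPres_Elliptic_En}. The presentation is complemented iff, for every unordered pair $\{x, y\}$ of generators with $x \neq y$, there is at most one relation whose two sides begin with $x$ and $y$ respectively, at most one relation whose two sides end with $x$ and $y$ respectively, and no relation with both sides beginning (or ending) with the same generator. So I simply need to record the first- and last-letter pairs for each relation and verify that no coincidence or self-pair occurs.

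For $M'(\tilde{D}_4^{(1,1)})$, I would tabulate: relation $(R_1)$, $t_i s_j t_i = s_j t_i s_j$, contributes the pair $\{t_i, s_j\}$ at both ends; relation $(R_2)$, $s_i s_j = s_j s_i$ with $i \neq j$, contributes $\{s_i, s_j\}$ at both ends; and relation $(R_3)$, $t_i t_{i-1} = t_j t_{j-1}$ with $i \neq j$, contributes the first-letter pair $\{t_i, t_j\}$ and the last-letter pair $\{t_{i-1}, t_{j-1}\}$. These three families produce disjoint sets of unordered pairs, since they involve respectively a mixed $t$-$s$ pair, a pure $s$ pair, and a pure $t$ pair; inside each family the pair of first (resp.\ last) letters uniquely identifies the relation; and no pair is of the form $\{x, x\}$, thanks to $i \neq j$ in $(R_2)$ and $(R_3)$.

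For $M'(\tilde{E}_n^{(1,1)})$, the same analysis handles the $(R_1), (R_2), (R_3)$ block (with $(R_2)$ restricted to indices in $\{1, 2, 3\}$). The additional relations are standard Coxeter relations of the form $s_a s_b = s_b s_a$ or $s_a s_b s_a = s_b s_a s_b$ read off from the diagrams in Figure~\ref{Fig_New_Diagram_En}, each uniquely determined by its unordered pair $\{a, b\}$ and each involving at least one index in $\{4, \ldots, n\}$. Their first- and last-letter pairs are therefore automatically disjoint from those coming from $(R_1), (R_2), (R_3)$, and the complementedness conditions remain satisfied.

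The verification is essentially combinatorial bookkeeping; the only conceptual point worth noticing is that the infinite family $(R_3)$ still produces a single relation per unordered pair of distinct $t$-indices. This one-relation-per-pair structure is precisely the feature that the presentations of Definitions~\ref{Def_PresD4_Yamada} and \ref{DefPres_Elliptic_En_Yamada} lacked (via relation $(P_3)$), and it is what will enable the word-reversing technique in the next step. No serious obstacle is anticipated.
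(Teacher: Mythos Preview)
Your proposal is correct and follows exactly the route the paper takes: the paper's own proof is the single sentence ``The proof is straightforward by looking at all the relations that define the presentations,'' and your case analysis is precisely that inspection written out in detail. Your observation that the family $(R_3)$ contributes exactly one relation per unordered pair of distinct $t$-indices is the only point requiring any care, and you have handled it correctly.
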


\begin{proof}
    The proof is straightforward by looking at all the relations that define the presentations.
\end{proof}

Notice that for all $u,v \in S^{*}$, $u^{-1}v \curvearrowright_{r} \epsilon$ implies that $u = v$ in the monoid. The notion of right-complete presentations that we will introduce now is when the converse holds.

\begin{defn}[Complete presentations]\label{Def_Complete}
    The positive presentation $\langle S\ |\ R \rangle$ of a monoid $M$ is said to be right-complete if for any $u,v \in S^{*}$, $u = v$ in $M$ implies that $u^{-1}v \curvearrowright_{r} \epsilon$. The notion of left-complete presentations is defined symmetrically. We say that the presentation is complete if it is right- and left-complete.
\end{defn}

The next result is due to Dehornoy~\cite{DehornoyCompletePosPres}.

\begin{prop}[The cube condition]\label{Prop_CubeCondition}
    The positive presentation $\langle S\ |\ R \rangle$ of a monoid $M$ is right-complete if and only if every triple of words $(u,v,w)$ over $S$ satisfies the right-cube condition:
    \begin{center}
    $u^{-1}ww^{-1}v \curvearrowright_{r} v'u'^{-1}$ implies $(uv')^{-1}vu' \curvearrowright_{r} \epsilon$.
    \end{center}
    A left-cube condition for left-completeness is defined symmetrically.
\end{prop}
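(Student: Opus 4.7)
The plan is to prove the two implications separately. The easier direction — right-completeness implies the cube condition — goes as follows. Suppose the presentation is right-complete, and let $u,v,w \in S^*$ with $u^{-1}ww^{-1}v \curvearrowright_{r} v'u'^{-1}$. Each step of the reversing process replaces a subword $a^{-1}b$ by $b'a'^{-1}$ where $ab'=ba'$ is a relation of $R$ (or deletes a matching pair $a^{-1}a$). Tracking the accumulated effect of the steps, one reads off from the resulting planar diagram that $uv' = vu'$ holds in the monoid $M$, not merely in the enveloping group, since the diagram exhibits a concrete derivation of this equality using relations of $R$. Right-completeness then gives $(uv')^{-1}(vu') \curvearrowright_{r} \epsilon$, which is exactly the right-cube conclusion.

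The converse is the essential content: assuming the right-cube condition holds for every triple, I want to show that $u = v$ in $M$ implies $u^{-1}v \curvearrowright_{r} \epsilon$. By definition of the monoid presentation, the equality $u = v$ is witnessed by a chain $u = w_0, w_1, \dots, w_k = v$ in which each $w_{i+1}$ is obtained from $w_i$ by replacing a subword via one relation of $R$. I would induct on $k$. The case $k \leq 1$ is direct: one unwinds the reversing rules on a word of the form $\beta^{-1} r^{-1} r' \beta$ using the single relation $(r, r') \in R$ to bring it to $\epsilon$, with the help of the deletion rule whenever matching letters meet.

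For the inductive step, suppose $u^{-1}w \curvearrowright_{r} \epsilon$ and $w^{-1}v \curvearrowright_{r} \epsilon$ for the intermediate word $w$ in the chain; we want $u^{-1}v \curvearrowright_{r} \epsilon$. The compound word $u^{-1}ww^{-1}v$ can be reversed by first processing its two halves, each producing the empty word. But one can also reverse it in another order, first filling in the two rectangles with sides $(u,w)$ and $(w,v)$; these reversings yield some $v'u'^{-1}$ with $uv' = vu'$ in $M$. The right-cube condition applied to the triple $(u,v,w)$ then forces $(uv')^{-1}(vu') \curvearrowright_{r} \epsilon$; cancellation of the matching prefixes produces the required $u^{-1}v \curvearrowright_{r} \epsilon$.

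The main obstacle is the combinatorial bookkeeping in this last step: the cube condition provides closure at a single three-dimensional corner, but assembling this into a global reversing-to-$\epsilon$ for an arbitrarily long chain requires iterating it over every intermediate cube of a reversing grid and verifying the iteration terminates. Dehornoy's treatment handles this via an induction on the area of the reversing diagram, exploiting the fact that a complemented presentation makes reversing deterministic at every step. I expect the delicate point to be showing that the iterated closures compose consistently, rather than producing nontrivial monodromy, so that the local three-dimensional closure genuinely assembles into the global closure demanded by right-completeness.
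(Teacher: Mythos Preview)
The paper does not prove this proposition at all: it is stated as a result ``due to Dehornoy~\cite{DehornoyCompletePosPres}'' and simply quoted without argument. So there is no paper proof to compare against; what you have written is an attempt to reconstruct Dehornoy's argument.

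Your outline is along the right lines, but the inductive step is muddled and contains an unnecessary detour that leads to a gap. You correctly observe that from $u^{-1}w \curvearrowright_r \epsilon$ and $w^{-1}v \curvearrowright_r \epsilon$ one gets $u^{-1}ww^{-1}v \curvearrowright_r \epsilon$. At this point you are done: the empty word is of the form $v'u'^{-1}$ with $v' = u' = \epsilon$, so the cube condition applied to this particular reversing yields $(u\epsilon)^{-1}(v\epsilon) = u^{-1}v \curvearrowright_r \epsilon$ directly. Instead you switch to a \emph{different} reversing producing some nontrivial $v'u'^{-1}$, obtain $(uv')^{-1}(vu') \curvearrowright_r \epsilon$, and then appeal to ``cancellation of the matching prefixes'' to recover $u^{-1}v \curvearrowright_r \epsilon$. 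That last step is not a legal reversing move and does not follow; the word $(uv')^{-1}(vu') = v'^{-1}u^{-1}vu'$ reversing to $\epsilon$ tells you nothing immediate about $u^{-1}v$ alone. Your final paragraph then tries to patch this with talk of iterated cubes, area induction, and complemented presentations --- none of which is needed, and the last is irrelevant since the proposition has no complementation hypothesis. Drop the second reversing entirely and apply the cube condition to the trivial one; the induction then closes cleanly.
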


\begin{exmp}
    We describe the right-cube condition for the triple $(s_7,t_2,s_8)$ in the monoid $M'(\tilde{E}_8^{(1,1)})$. Notice that $t_2$ commutes with $s_7$ and $s_8$, and we have $s_7s_8s_7 = s_8s_7s_8$. Let $u := s_7$, $v := t_2$, and $w := s_8$.
    \begin{center}
    \begin{tikzpicture}[scale=1.1]
    \draw [-stealth] (0,2) to node[left] {$u$} (0,0);
    \draw [-stealth] (0,2) to node[above] {$w$} (2,2);
    \draw [-stealth] (2,4) to node[left] {$w$} (2,2);
    \draw [-stealth] (2,4) to node[above] {$v$} (4,4);
    \draw [-stealth] (0,0) to node[below] {$w$} (1,0);
    \draw [-stealth] (1,0) to node[below] {$u$} (2,0);
    \draw [-stealth] (2,2) to node[left] {$u$} (2,1);
    \draw [-stealth] (2,1) to node[left] {$w$} (2,0);
    \draw [-stealth] (2,2) to node[above] {$v$} (4,2);
    \draw [-stealth] (4,4) to node[right] {$w$} (4,2);
    \draw [-stealth] (2,1) to node[above] {$v$} (4,1);
    \draw [-stealth] (4,2) to node[right] {$u$} (4,1);
    \draw [-stealth] (2,0) to node[below] {$v$} (4,0);
    \draw [-stealth] (4,1) to node[right] {$w$} (4,0);

    \node[] (text) at (6,2) {implies};

    \draw [-stealth] (8,1) to node[left] {$v$} (8,0);
    \draw [-stealth] (8,2) to node[left] {$u$} (8,1);
    \draw [-stealth] (8,3) to node[left] {$w$} (8,2);
    \draw [-stealth] (8,4) to node[left] {$u$} (8,3);
    \draw [-stealth] (8,4) to node[above] {$v$} (9,4);
    \draw [-stealth] (9,4) to node[above] {$w$} (10,4);
    \draw [-stealth] (10,4) to node[above] {$u$} (11,4);
    \draw [-stealth] (11,4) to node[above] {$w$} (12,4);

    \draw [-stealth] (8,3) to node[above] {$v$} (9,3);
    \draw [-stealth] (9,4) to node[left] {$u$} (9,3);
    \draw [-stealth] (8,2) to node[above] {$v$} (9,2);
    \draw [-stealth] (9,3) to node[left] {$w$} (9,2);
    \draw [-stealth] (8,1) to node[above] {$v$} (9,1);
    \draw [-stealth] (9,2) to node[left] {$u$} (9,1);
    \draw [bend right] (8,0) to node[below] {$\epsilon$} (9,1);

    \draw [-stealth] (9,3) to node[below] {$w$} (9.5,3);
    \draw [-stealth] (9.5,3) to node[below] {$u$} (10,3);
    \draw [-stealth] (10,4) to node[right] {$u$} (10,3.5);
    \draw [-stealth] (10,3.5) to node[right] {$w$} (10,3);

    \draw [bend right] (9,2) to node[below] {$\epsilon$} (9.5,3);
    \draw [bend right] (9,1) to node[below] {$\epsilon$} (10,3);

    \draw [bend right] (10,3.5) to node[below] {$\epsilon$} (11,4);
    \draw [bend right] (10,3) to node[below] {$\epsilon$} (12,4);
    \end{tikzpicture}
    \end{center}
\end{exmp}

\begin{lem}\label{Lem_NewPres_Complete}
    The presentations of the monoids $M'(\tilde{D}_4^{(1,1)})$ and $M'(\tilde{E}_n^{(1,1)})$ are complete.
\end{lem}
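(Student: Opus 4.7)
The plan is to apply the cube condition of Proposition~\ref{Prop_CubeCondition}. By Lemma~\ref{Lem_NewPres_Complemented} our presentations are complemented, which is the hypothesis under which Dehornoy's reduction applies: right-completeness of a complemented presentation follows once the right-cube condition has been verified on every triple $(u,v,w)$ of \emph{single generators}. By a symmetric argument the same reduction holds for left-completeness. Although the generating set is infinite, the relations are invariant under the $\mathbb{Z}$-action shifting the indices of the $t_i$'s (this is the content of $(R_3)$), so only finitely many cube configurations need to be checked modulo this action.

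I would then organise the generator triples into three families. First, triples involving only the $s$-generators fall under the classical Coxeter-type cube condition: either all three commute pairwise (the cube closes trivially), or two of them satisfy a braid relation while the third commutes with both, in which case the cube condition is the classical Artin type-$A_2$ or $A_1 \times A_1$ check. Second, triples of the form $(t_i, t_j, t_k)$ are governed purely by $(R_3)$; since reversing $t_i^{-1} t_j$ outputs $t_{j-1} t_{i-1}^{-1}$, an elementary index computation shows that the associated reversing sequence terminates in $\epsilon$. Third, triples involving an outer Coxeter generator $s_\ell$ with $\ell \geq 4$ (in types $\tilde{E}_n^{(1,1)}$) commute with all $t_i$'s and with most other $s_j$'s, so their cube conditions collapse onto the already-verified sub-cases.

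The main obstacle lies in the remaining mixed triples of the form $(s_k, t_i, t_j)$ and their permutations, where $k \in \{1,2,3\}$ (respectively $\{1,2,3,4\}$ for $\tilde{D}_4^{(1,1)}$), since these combine the braid relation $(R_1)$ with the index-shifting relation $(R_3)$. Here the verification is closely modelled on Lemmas~\ref{LemP3InG'}--\ref{LemR1InG}: those lemmas already establish that $(R_1)$ holds uniformly for every $t_i$ and that the products $t_i t_{i-1}$ are constant, giving the exact algebraic identities needed to close each cube. I expect to proceed by drawing the planar graph associated with $u^{-1} w w^{-1} v$, propagating $(R_1)$-reversings to the correct shifted indices via $(R_3)$, and observing that the resulting boundary word $(uv')^{-1} v u'$ reverses to $\epsilon$ by cancellation. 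The bookkeeping of indices is the principal technical cost, but no algebraic input beyond Lemmas~\ref{LemP3InG'}--\ref{LemR1InG} should be required.
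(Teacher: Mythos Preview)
Your strategy is essentially correct, but one justification needs adjustment: complementedness alone does not license the reduction of the cube condition from arbitrary word triples to triples of single generators. That reduction requires a termination guarantee for reversing, and in Dehornoy's framework (Proposition~4.4 of \cite{DehornoyCompletePosPres}, the reference the paper invokes) it is the \emph{homogeneity} of the presentation---every relation preserving word length---that supplies it. The presentations here are visibly homogeneous, so the reduction is valid, but you should cite that property rather than Lemma~\ref{Lem_NewPres_Complemented}.

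With this fixed, your approach differs from the paper's chiefly in where the labour is done. The paper takes the same first step (reduce to generator triples via homogeneity) and then simply observes that every cube configuration arising from $(R_1)$--$(R_3)$ already occurs in the Corran--Picantin presentations of the complex braid groups $B(e,e,n)$, and cites Section~3.2.1 of \cite{CorranPicantin} for the full verification; the proof ends there. You instead propose a direct case analysis, organised by generator type and exploiting the $\mathbb{Z}$-shift symmetry of $(R_3)$ to reduce to finitely many configurations. Your route is more self-contained and makes explicit why only finitely many cubes arise despite the infinite generating set; the paper's route is much shorter but depends on the reader locating the identical computation in \cite{CorranPicantin}. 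The mixed $(s_k,t_i,t_j)$ cubes you flag as the main obstacle are precisely the ones handled there, so if you carry out your plan you will in effect be reproducing that argument.
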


\begin{proof}
    By Proposition~\ref{Prop_CubeCondition}, it is sufficient to show the right- and left-cube conditions for any triple of words over the generating set. Since the presentations are also homogeneous, meaning that every relation preserves the length of words, it is enough to check the cube condition for all triples of generators - and not words - (see Proposition~4.4 of \cite{DehornoyCompletePosPres}). In view of our presentations given in Propositions~\ref{Prop_New_PresD4} and \ref{Prop_NewPres_Elliptic_En}, all the cases for the triples of generators that we need to consider are already done in Section~3.2.1 of \cite{CorranPicantin}. This establishes our result.
\end{proof}

In order to conclude cancellativity of monoids, let us recall this result of Dehornoy \cite{DehornoyCompletePosPres}.

\begin{prop}[Corollary 6.2 in \cite{DehornoyCompletePosPres}]\label{Prop_ThmDehornoy_Cancel}
    Assume that the presentation $\langle S\ |\ R \rangle$ of a monoid $M$ is complete and complemented, then the monoid $M$ is cancellative.
\end{prop}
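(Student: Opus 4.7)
The strategy is to prove the two one-sided cancellation properties (left and right) separately, since together they imply full cancellativity: if $uvw = uv'w$, then left-cancelling by $u$ and right-cancelling by $w$ yields $v = v'$. By the manifest left--right symmetry of the definitions of right- and left-reversing and of right- and left-complete/complemented, it suffices to establish left-cancellativity from right-completeness and right-complementation; the right-cancellativity statement follows by the mirror argument with left-reversing.

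Concretely, suppose $uv = uv'$ in $M$ with $u,v,v' \in S^{*}$; the goal is to show $v = v'$. Since the presentation is right-complete, Definition~\ref{Def_Complete} applied to $uv$ and $uv'$ gives $v^{-1}u^{-1}uv' \curvearrowright_{r} \epsilon$. The plan is to show that this reversing sequence can be reorganised so as to first collapse the central block $u^{-1}u$, producing an intermediate derivation $v^{-1}u^{-1}uv' \curvearrowright_{r} v^{-1}v'$, and then continue by $v^{-1}v' \curvearrowright_{r} \epsilon$. The converse direction (if $v^{-1}v' \curvearrowright_{r} \epsilon$ then $v=v'$ in $M$) is immediate from the definition of reversing, since each elementary reversing step preserves the element represented in the group of fractions of $M$, hence in $M$ itself.

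The main technical content is therefore the rearrangement step, and this is where complementation is essential. Complementation guarantees that at every occurrence of a pattern $x^{-1}y$ with $x,y \in S$, at most one reversing step is applicable, which yields the standard local confluence diamond for right-reversing; combined with the fact that the reversing process is terminating for the words in question (a consequence of completeness together with homogeneity of the presentations of interest), this provides global confluence. Writing $u = s_1 \cdots s_k$ and $u^{-1}u = s_k^{-1}\cdots s_1^{-1}s_1 \cdots s_k$, a trivial induction on $k$ shows that the subword $u^{-1}u$ right-reverses to $\epsilon$ by iteratively cancelling innermost pairs $s_i^{-1}s_i$. Using the diamond to commute these cancellations past any reversing step performed in the surrounding context, one obtains the desired factorisation of the reversing derivation through $v^{-1}v'$.

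The hard part is precisely this scheduling argument, namely justifying that the inner cancellations inside $u^{-1}u$ can always be carried out before the reversing crosses the boundary into $v^{-1}$ or $v'$. A clean abstract packaging, which is in fact Dehornoy's approach, avoids manipulating arbitrary sequences: define the right-complement $w \mapsto w^{*}$ on negative-positive words $w = u^{-1}v$ by taking any terminal value of right-reversing starting from $w$, use complementation plus local confluence to verify that $w^{*}$ is well-defined, and observe that $(uv)^{-1}(uv')$ and $v^{-1}v'$ have the same right-complement because $u^{-1}u$ has right-complement $\epsilon$. Completeness then converts the equality of complements into the equality $v = v'$ in $M$, and the symmetric left-reversing argument yields right-cancellation, completing the proof.
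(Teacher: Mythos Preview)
The paper does not prove this proposition; it is quoted as Corollary~6.2 of \cite{DehornoyCompletePosPres} and used as a black box, so your argument must stand on its own. Your overall plan is the right one: left-cancellativity reduces to the implication $(uv)^{-1}(uv') \curvearrowright_{r} \epsilon \Rightarrow v^{-1}v' \curvearrowright_{r} \epsilon$, after which the easy direction of reversing gives $v=v'$ in $M$. The gap is in how you justify that implication. You invoke global confluence of right-reversing, obtained from local confluence plus \emph{termination} via Newman's lemma; but termination is not among the hypotheses, and your stated justification (``completeness together with homogeneity of the presentations of interest'') imports homogeneity, which the general proposition does not assume. Your alternative packaging via a right-complement map $w\mapsto w^{*}$ has the same defect, since defining $w^{*}$ as a terminal value of reversing already presupposes that reversing terminates.

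The fix is that confluence is unnecessary. Write $u=s_1\cdots s_k$; in the word $v^{-1}u^{-1}uv'$ there is a single negative--positive junction, and it sits at $s_1^{-1}s_1$. A relation-based reversing step across this junction would require a relation in $R$ whose two sides begin with the same generator $s_1$, which right-complementation forbids. Hence the first step of \emph{every} reversing sequence from $v^{-1}u^{-1}uv'$ to $\epsilon$ must be a deletion of some $\alpha^{-1}\alpha$ with $\alpha$ a nonempty common prefix of $uv$ and $uv'$. A short induction on $|u|$ (distinguishing $|\alpha|\le|u|$ from $|\alpha|>|u|$) then yields $v^{-1}v' \curvearrowright_{r} \epsilon$ directly, with no appeal to confluence or termination; the symmetric argument with left-reversing handles right-cancellativity. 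This forced-first-step observation is the actual engine of Dehornoy's proof, and it is what your scheduling/diamond argument was reaching for.
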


The next theorem is the main result of this section. 

\begin{thm}\label{Thm_Cancellativity}
    The elliptic Artin monoids $M'(\tilde{D}_4^{(1,1)})$ and $M'(\tilde{E}_n^{(1,1)})$ are cancellative.
\end{thm}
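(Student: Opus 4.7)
The plan is to apply Dehornoy's criterion (Proposition~\ref{Prop_ThmDehornoy_Cancel}) directly: once I know that the positive presentations in Propositions~\ref{Prop_New_PresD4} and \ref{Prop_NewPres_Elliptic_En} are both complemented and complete, cancellativity of the associated monoids is automatic. All the real work has already been carried out in the preceding two lemmas, so the theorem itself should reduce to a short assembly argument.

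First, I would invoke Lemma~\ref{Lem_NewPres_Complemented}, which certifies that the presentations defining $M'(\tilde{D}_4^{(1,1)})$ and $M'(\tilde{E}_n^{(1,1)})$ are complemented in the sense of Definition~\ref{Def_Complemented}: one simply inspects relations $(R_1)$, $(R_2)$, $(R_3)$ and the standard Coxeter relations involving $s_4,\ldots,s_n$ and checks that for any ordered pair of generators $(x,y)$ with $x \neq y$ there is at most one relation of the form $x\cdots = y\cdots$, and there is never a relation $x\cdots = x\cdots$. Next, I would invoke Lemma~\ref{Lem_NewPres_Complete}, which, via the right- and left-cube conditions of Proposition~\ref{Prop_CubeCondition} together with homogeneity and the reduction to triples of generators from Corran--Picantin, establishes that the presentations are complete.

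With both properties in hand, Proposition~\ref{Prop_ThmDehornoy_Cancel} applies verbatim to each of the presentations in question, yielding cancellativity of $M'(\tilde{D}_4^{(1,1)})$ and of $M'(\tilde{E}_n^{(1,1)})$ for $n=6,7,8$. Concretely, the plan is to write: ``By Lemmas~\ref{Lem_NewPres_Complemented} and \ref{Lem_NewPres_Complete}, the presentations of $M'(\tilde{D}_4^{(1,1)})$ and $M'(\tilde{E}_n^{(1,1)})$ are both complemented and complete, so Proposition~\ref{Prop_ThmDehornoy_Cancel} yields cancellativity.''

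The only conceivable obstacle is whether the cube-condition check in Lemma~\ref{Lem_NewPres_Complete} genuinely covers every triple of generators arising in the new presentation. In particular, one should double-check triples mixing a $t_i$ with generators $s_j$ for $j \geq 4$ (which commute with both $t_0$ and $t_1$, and hence commute with every $t_i$ via the expression in Lemma~\ref{LemT_kInTermsT_0T_1}), as well as triples of the form $(t_i, t_j, s_k)$ and $(t_i, s_j, s_k)$. These fit into the same case analysis as in Section~3.2.1 of \cite{CorranPicantin}, so by the homogeneity reduction they need not be re-examined here. Once this is noted, the theorem is immediate from the two lemmas and Proposition~\ref{Prop_ThmDehornoy_Cancel}.
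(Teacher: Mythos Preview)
Your proposal is correct and follows exactly the paper's approach: invoke Lemma~\ref{Lem_NewPres_Complemented} for complementedness, Lemma~\ref{Lem_NewPres_Complete} for completeness, and conclude via Proposition~\ref{Prop_ThmDehornoy_Cancel}. One small remark: in your discussion of the potential obstacle, the commutation of $s_j$ (for $j\geq 4$) with each $t_i$ is a \emph{defining relation} of the monoid presentation in Proposition~\ref{Prop_NewPres_Elliptic_En} (read off from the diagram), not something deduced in the monoid from Lemma~\ref{LemT_kInTermsT_0T_1}, which is a group computation.
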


\begin{proof}
    By Lemma~\ref{Lem_NewPres_Complemented}, the presentations are complemented. By Lemma~\ref{Lem_NewPres_Complete}, they are also complete. Applying Proposition~\ref{Prop_ThmDehornoy_Cancel}, the elliptic Artin monoids $M'(\tilde{D}_4^{(1,1)})$ and $M'(\tilde{E}_n^{(1,1)})$ defined by the presentations of Propositions~\ref{Prop_New_PresD4} and \ref{Prop_NewPres_Elliptic_En} are cancellative.
\end{proof}

\bibliographystyle{alpha}
\bibliography{Bibliography}

\newcommand{\etalchar}[1]{$^{#1}$}
\begin{thebibliography}{DDG{\etalchar{+}}15}

\bibitem[CLL15]{CorranLeeLee}
Ruth Corran, Eon-Kyung Lee, and Sang-Jin Lee.
\newblock Braid groups of imprimitive complex reflection groups.
\newblock {\em J. Algebra}, 427:387--425, 2015.

\bibitem[CP11]{CorranPicantin}
Ruth Corran and Matthieu Picantin.
\newblock A new {G}arside structure for the braid groups of type {$(e,e,r)$}.
\newblock {\em J. Lond. Math. Soc. (2)}, 84(3):689--711, 2011.

\bibitem[DDG{\etalchar{+}}15]{DehornoyBook}
Patrick Dehornoy, Fran\c~cois Digne, Eddy Godelle, Daan Krammer, and Jean Michel.
\newblock {\em Foundations of {G}arside theory}, volume~22 of {\em EMS Tracts in Mathematics}.
\newblock European Mathematical Society (EMS), Z\"urich, 2015.
\newblock Author name on title page: Daan Kramer.

\bibitem[Deh03]{DehornoyCompletePosPres}
Patrick Dehornoy.
\newblock Complete positive group presentations.
\newblock {\em J. Algebra}, 268(1):156--197, 2003.

\bibitem[Loo80]{Looijenga1980}
Eduard Looijenga.
\newblock Invariant theory for generalized root systems.
\newblock {\em Invent. Math.}, 61(1):1--32, 1980.

\bibitem[Loo77]{Looijenga1976}
Eduard Looijenga.
\newblock Root systems and elliptic curves.
\newblock {\em Invent. Math.}, 38(1):17--32, 1976/77.

\bibitem[Nea19]{NeaimeEEN}
Georges Neaime.
\newblock Interval {G}arside structures for the complex braid groups {$B(e,e,n)$}.
\newblock {\em Trans. Amer. Math. Soc.}, 372(12):8815--8848, 2019.

\bibitem[Nea22]{Neaime_TildeA}
Georges Neaime.
\newblock Interval {G}arside structures related to the affine {A}rtin groups of type {$\tilde A$}.
\newblock {\em J. Algebra}, 607:411--436, 2022.

\bibitem[Sai]{SaitoHigherHomotopy}
Kyoji Saito.
\newblock {E}lliptic {A}rtin monoids and second homotopy classes of elliptic discriminant complement.
\newblock {\em In preparation.}

\bibitem[Sai85]{SaitoI}
Kyoji Saito.
\newblock Extended affine root systems. {I}. {C}oxeter transformations.
\newblock {\em Publ. Res. Inst. Math. Sci.}, 21(1):75--179, 1985.

\bibitem[Sai90]{SaitoII}
Kyoji Saito.
\newblock Extended affine root systems. {II}. {F}lat invariants.
\newblock {\em Publ. Res. Inst. Math. Sci.}, 26(1):15--78, 1990.

\bibitem[Sai24]{Saito2ndHomotopyNonCancellative}
Kyoji Saito.
\newblock {S}econd homotopy classes associated with non-cancellative monoids.
\newblock {\em arXiv:2312.16951}, 2024.

\bibitem[Shi02]{Shi}
Jian-Yi Shi.
\newblock Certain imprimitive reflection groups and their generic versions.
\newblock {\em Trans. Amer. Math. Soc.}, 354(5):2115--2129, 2002.

\bibitem[SS]{KSaito-YSaito}
Kyoji Saito and Yoshihisa Saito.
\newblock {E}lliptic {A}rtin groups.
\newblock {\em In preparation.}

\bibitem[SS09]{YSaito-Shiota}
Yoshihisa Saito and Midori Shiota.
\newblock On {H}ecke algebras associated with elliptic root systems and the double affine {H}ecke algebras.
\newblock {\em Publ. Res. Inst. Math. Sci.}, 45(3):845--905, 2009.

\bibitem[ST97]{Saito-Takebayashi}
Kyoji Saito and Tadayoshi Takebayashi.
\newblock Extended affine root systems. {III}. {E}lliptic {W}eyl groups.
\newblock {\em Publ. Res. Inst. Math. Sci.}, 33(2):301--329, 1997.

\bibitem[vdL83]{vanderLek-Thesis}
Harm van~der Lek.
\newblock The homotopy type of complex hyperplane complements.
\newblock {\em PhD Thesis}, Nijmegen, 1983.

\bibitem[Yam00]{Yamada}
Hiroshi Yamada.
\newblock Elliptic root system and elliptic {A}rtin group.
\newblock {\em Publ. Res. Inst. Math. Sci.}, 36(1):111--138, 2000.

\end{thebibliography}
\vspace*{1em}
%%%%%%%%%%%%%%
\end{document}